\newcommand{\vtx}[1]{*+[o][F-]{\scriptscriptstyle #1}}
\newtheorem{definition}{Definition}
\newtheorem{lemma}{Lemma}
\title{What is a noncommutative topos?}
\author{Karin Cvetko-Vah} \thanks{Karin Cvetko-Vah acknowledges the financial support from the Slovenian Research Agency (research core funding No. P1-0222)}
\address{University of Ljubljana, Faculty of Mathematics and Physics\\Jadranska 19\\1000 Ljubljana (Slovenia) \\ {\tt karin.cvetko@fmf.uni-lj.si}}
\author{Jens Hemelaer}\thanks{Jens Hemelaer is a Ph.D. fellow of the Research Foundation - Flanders (FWO)}
\address{Department of Mathematics, University of Antwerp \\ 
 Middelheimlaan 1, B-2020 Antwerp (Belgium) \\ {\tt jens.hemelaer@uantwerpen.be}}
\author{Lieven Le Bruyn} 
\address{Department of Mathematics, University of Antwerp \\ 
 Middelheimlaan 1, B-2020 Antwerp (Belgium) \\ {\tt lieven.lebruyn@uantwerpen.be}}
\begin{document}
\sloppy

\maketitle

\begin{abstract} In \cite{KarinNCH} noncommutative frames were introduced, generalizing the usual notion of frames of open sets of a topological space. In this paper we extend this notion to noncommutative versions of Grothendieck topologies and their associated noncommutative toposes of sheaves of sets.
\end{abstract}

\vskip 3mm

\begin{center}
{\it For Fred Van Oystaeyen on his 70th birthday.}
\end{center}

\section{Introduction}

The set $\Omega$ of all open sets of a topological space $X$ is a complete Heyting algebra: it is partially ordered under inclusion, the join $\vee$ and meet $\wedge$ operations  are resp.\ union and intersection of opens, the implication operator $U \rightarrow V$ is defined to be the largest open set $W$ satisfying $W \cap U \subseteq V$, and it has a unique bottom element $0 = \emptyset$ and top element $1 = X$, see for example \cite[\S I.8]{MM}.

Let $\mathcal{F}$ be a sheaf of sets over the constructible topology on $X$, that is the topology generated by all open {\em and} all closed subsets of $X$. For every open set $U$ in $X$ we consider $\{ (U,s)~|~s \in \Gamma(U,\mathcal{F}) \}$. The set $H$ of all such possible $(U,s)$ is partially ordered under $(U,s) \leq (V,t)$ if and only if $U \subseteq V$ and $t | U = s$. Fix a distinguished global section $g \in \Gamma(X,\mathcal{F})$.
We now define noncommutative operations of $H$ as follows
\begin{itemize}
\item{$(U,s) \wedge (V,t) = (U \cap V,s | U \cap V)$,}
\item{$(U,s) \vee (V,t) = (U \cup V,t \cup s | U-V)$,}
\item{$(U,s) \rightarrow (V,t) = (U \rightarrow V, t \cup g | (U \rightarrow V)-V)$}
\end{itemize}
$H$ still has a unique bottom element corresponding to $0 = \emptyset$, but now has a family $\{ (X,t)~|~t \in \Gamma(X,\mathcal{F}) \}$ of top elements, and observe that the downset of each of them $(X,t)_{\downarrow}$ is isomorphic to the Heyting algebra $\Omega$, and if we consider Green's equivalence relation $\mathcal{D}$
\[
(U,s)~\mathcal{D} ~(V,t) \quad \text{if and only if} \quad \begin{cases} (U,s) \wedge (V,t) \wedge (U,s) = (U,s) \\
(V,t) \wedge (U,s) \wedge (V,t) = (V,t) \end{cases} \]
then the equivalence classes $H / \mathcal{D}$ with the induced structures are isomorphic to $\Omega$ as Heyting algebras. $H$ is an example of a noncommutative complete Heyting algebra as introduced and studied in \cite{KarinNCH}. We can view $H$ as the set of opens of a noncommutative topological space with commutative shadow $X$.

\vskip 3mm

In this paper we aim to define, in a similar way, noncommutative counterparts of toposes $\mathbf{Sh}(\mathbf{C},J)$ of sheaves of sets with respect to a Grothendieck topology $J$ on a small category $\mathbf{C}$. Fred Van Oystaeyen suggested in his book 'Virtual topology and functor geometry' a possible approach:

\vskip 3mm

\begin{quote}
"One easily finds that the first main problem is to circumvent the notion of subobject classifier. An approach may be to allow a {\em family} of 'subobject classifiers' defined in a suitable way."  \cite[p. 44]{FVOVT}
\end{quote}

\vskip 3mm

Let $\widehat{\mathbf{C}}$ be the topos of presheaves on $\mathbf{C}$, that is, with objects all contravariant functors $\mathbf{P} : \mathbf{C} \rTo \mathbf{Sets}$ and with morphisms all natural transformations. Recall from \cite[\S III.7]{MM} that the natural transformation $true : \mathbf{1} \rTo \mathbf{\Omega}$ is the subobject classifier of $\widehat{\mathbf{C}}$, where for every object $C$ of $\mathbf{C}$ we take $\mathbf{\Omega}(C)$ to be the set of all sieves on $C$ and where the global section $true$ picks out the unique maximal sieve $\mathbf{y}(C)$ of all morphisms with codomain $C$. Each $\mathbf{\Omega}(C)$ is a complete Heyting algebra, that is, $\mathbf{\Omega}$ is a presheaf of complete Heyting algebras on $\mathbf{C}$. We will define a {\em noncommutative subobject classifier} $\mathbf{H}$ to be a presheaf of noncommutative complete Heyting algebras making the diagram below commute
\[
\xymatrix{\mathbf{C} \ar[rr]^{\mathbf{\Omega}} \ar[rd]_{\mathbf{H}} & & \mathbf{cHA} \\ & \mathbf{ncHA} \ar[ru]_{./\mathcal{D}} &} \]
where $./\mathcal{D} : \mathbf{ncHA} \rTo \mathbf{cHA}$ is the covariant functor sending a noncommutative complete Heyting algebra $H$ to its commutative shadow $H/\mathcal{D}$. Note that $\mathbf{H}$ has a subobject $t_{\mathbf{H}} : \mathbf{T} \rTo \mathbf{H}$ where $\mathbf{T}$ is the presheaf of top elements of $\mathbf{H}$. We will often recite these two mantras:

(1) : Occurrences of the terminal object $\mathbf{1}$ and $\mathbf{\Omega}$ in classical definitions should be replaced by the presheaves $\mathbf{T}$ and $\mathbf{H}$.

(2) : All noncommutative structures will determine {\em families} of classical structures, parametrized by the global sections of $\mathbf{T}$.

\vskip 3mm

Let us illustrate this in the definition of the noncommutative Heyting algebra $\mathbf{Sub}_{\mathbf{H}}(\mathbf{P})$ generalizing the classical Heyting algebra of subobjects $\mathbf{Sub}(\mathbf{P})$ of $\mathbf{P} \in \widehat{\mathbf{C}}$. Subobjects of $\mathbf{P}$ are in one-to-one correspondence with natural transformations $N : \mathbf{P} \rTo \mathbf{\Omega}$ via the pullback diagram on the left below
\[
\xymatrix{\mathbf{Q} \ar[r]^N \ar[d] & \mathbf{1} \ar[d]^{true} \\ \mathbf{P} \ar[r]_N & \mathbf{\Omega}} \hskip 3cm
\xymatrix{\mathbf{Q} \ar[r]^N \ar[d] & \mathbf{T} \ar[d]^{t_{\mathbf{H}}} \\ \mathbf{P} \ar[r]_N & \mathbf{H}} 
\]
Similarly, elements of $\mathbf{Sub}_{\mathbf{H}}(\mathbf{P})$ will be pairs $(\mathbf{Q},N)$ where $N : \mathbf{P} \rTo \mathbf{H}$ is a natural transformation and $\mathbf{Q}$ is the pullback subobject of the diagram on the right above. Because $\mathbf{H}$ is a presheaf of noncommutative Heyting algebras we have that if $N$ and $N'$ are natural transformations from $\mathbf{P}$ to $\mathbf{H}$ then so are $N \wedge N'$, $N \vee N'$ and $N \rightarrow N'$ as defined in lemma~\ref{structures}. This then allows us to define operations on $\mathbf{Sub}_{\mathbf{H}}(\mathbf{P})$
\[
\begin{cases}
(\mathbf{Q},N) \wedge (\mathbf{Q'},N') = (\mathbf{Q} \wedge \mathbf{Q'},N \wedge N') \\
(\mathbf{Q},N) \vee (\mathbf{Q'},N') = (\mathbf{Q} \vee \mathbf{Q'},N \vee N') \\
(\mathbf{Q},N) \rightarrow (\mathbf{Q'},N') = (\mathbf{Q} \rightarrow \mathbf{Q'},N \rightarrow N')
\end{cases}
\]
where we have the pull-back diagrams
\[
\xymatrix{\mathbf{Q}\wedge \mathbf{Q'} \ar[rr] \ar[d] & & \mathbf{T} \ar[d]^{t_{\mathbf{H}}} \\ \mathbf{P} \ar[rr]_{N \wedge N'} & & \mathbf{H}}  \quad
\xymatrix{\mathbf{Q} \vee \mathbf{Q'} \ar[rr] \ar[d] & & \mathbf{T} \ar[d]^{t_{\mathbf{H}}} \\ \mathbf{P} \ar[rr]_{N \vee N'} & & \mathbf{H}}  \quad
\xymatrix{\mathbf{Q} \rightarrow \mathbf{Q'} \ar[rr] \ar[d] & & \mathbf{T} \ar[d]^{t_{\mathbf{H}}} \\ \mathbf{P} \ar[rr]_{N \rightarrow N'} & & \mathbf{H}} 
\]
defining a noncommutative Heyting algebra structure. Let $\Gamma(\mathbf{T})$ be the set of global sections $g : \mathbf{1} \rTo \mathbf{T}$ of the presheaf of top elements $\mathbf{T}$, then there is a morphism
\[
\mathbf{sub}_{\mathbf{H}}(\mathbf{P}) \rTo \prod_{g \in \Gamma(\mathbf{T})} \mathbf{Sub}(\mathbf{P}) \qquad (\mathbf{Q},N) \mapsto (\mathbf{Q}_g)_{g \in \Gamma(\mathbf{T})}  \]
with $\mathbf{Q}_g$ determined by the diagram below
\[
\xymatrix{
\mathbf{Q}_g \ar[r] \ar[d] & \mathbf{1} \ar[d]^g \ar[rd]^{id} & \\
\mathbf{Q} \ar[r]^N \ar[d] & \mathbf{T} \ar[r] \ar[d]^{t_{\mathbf{H}}} & \mathbf{1} \ar[d]^{true} \\
\mathbf{P} \ar[r]_N & \mathbf{H} \ar[r]_{./\mathcal{D}} & \mathbf{\Omega}} \]
Having defined noncommutative subobject classifiers $\mathbf{H}$, we approach defining noncommutative Grothendieck topologies via generalizing Lawvere-Tierney topologies on $\widehat{\mathbf{C}}$, see for example \cite[\S V.1]{MM}. A {\em noncommutative Lawvere topology} will then be a natural transformation $j_{\mathbf{H}} : \mathbf{H} \rTo \mathbf{H}$ satisfying

(NLT1) : $j_{\mathbf{H}} \circ t_{\mathbf{H}} = t_{\mathbf{H}}$,

(NLT2) : $j_{\mathbf{H}} \circ j_{\mathbf{H}} = j_{\mathbf{H}}$,

\[
\xymatrix{\mathbf{T} \ar[r]^{t_{\mathbf{H}}} \ar[rd]_{t_{\mathbf{H}}} & \mathbf{H} \ar[d]^{j_{\mathbf{H}}} \\ & \mathbf{H}}~\qquad
\xymatrix{\mathbf{H} \ar[r]^{j_{\mathbf{H}}} \ar[rd]_{j_{\mathbf{H}}} & \mathbf{H} \ar[d]^{j_{\mathbf{H}}} \\ & \mathbf{H}}~\qquad
\]

(NLT3) : For every object $C$ in $\mathbf{C}$, every top-element $t \in \mathbf{T}(C)$ and all $x,y \in t_{\downarrow} \subset \mathbf{T}(C)$ we have
 the condition
\[
j_{\mathbf{H}}(C)(x \wedge y) = j_{\mathbf{H}}(C)(x) \wedge j_{\mathbf{H}}(C)(y) \]
Again, every global section $g : \mathbf{1} \rTo \mathbf{T}$ determines a Lawvere-Tierney topology on $\widehat{\mathbf{C}}$
via the restriction of $j_{\mathbf{H}}$ on $g_{\downarrow} \simeq \mathbf{\Omega}$.

As $\mathbf{C}$ is a small category there is a one-to-one correspondence between Lawvere-Tierney topologies on $\widehat{\mathbf{C}}$ and Grothendieck topologies on $\mathbf{C}$. Extending this, we have that a noncommutative Lawvere topology determines a {\em noncommutative Grothendieck topology} by associating to every object $C$ the following collection of elements from $\mathbf{Sub}_{\mathbf{H}}(\mathbf{y}C)$
\[
J_{\mathbf{H}}(C) = \{ (S,x) \in \mathbf{\Omega}(C) \times \mathbf{H}(C)~|~(S,x) \in \mathbf{Sub}_{\mathbf{H}}(\mathbf{y}C)~\text{and}~ j_{\mathbf{H}}(C)(x) \in \mathbf{T}(C) \} \]
This then allows us to define a presheaf $\mathbf{F}$ in the slice category $\widehat{\mathbf{C}}/\mathbf{T}$ to be a {\em sheaf} for the noncommutative Grothendieck topology $J_{\mathbf{H}}$ if and only if for every object $C$ of $\mathbf{C}$, every element $(S,x) \in J_{\mathbf{H}}(C)$, and every morphism $g$ in $\widehat{\mathbf{C}}/\mathbf{T}$
\[
\xymatrix{ & \mathbf{y} C \ar@{.>}[rd]^{\exists !} & \\ S \ar[rr]^g \ar[rd]_x \ar[ru] & & \mathbf{F} \ar[ld]^{\pi_{\mathbf{F}}} \\
& \mathbf{T} & } \]
there is a unique morphism $\mathbf{y} C \rTo \mathbf{F}$ in $\widehat{\mathbf{C}}$. Here $S \rTo^x \mathbf{T}$ is the pull-back map induced by the natural transformation $x : \mathbf{y} C \rTo \mathbf{H}$. The category of all such sheaves $\mathbf{Sh}(\mathbf{C},J_{\mathbf{H}})$ is then called a {\em noncommutative topos}.

In the last section we present a large class of examples of noncommutative subobject classifiers and give an explicit example of a noncommutative topos which is {\em not} a Grothendieck topos, nor even an elementary topos.

\section{Noncommutative Heyting algebras}

In this section we will recall the main structural results on noncommutative (complete) Heyting algebras obtained in \cite{KarinNCH}.

\vskip 2mm

Recall that a bounded lattice $L$ is a set with two distinguished elements $0$ and $1$ and two binary operations $\vee$ and $\wedge$ which are both idempotent, associative and commutative and satisfy the identities
\[
1 \wedge x = x, \qquad 0 \vee x = x \]
\[
x \wedge(y \vee x) = x = (x \wedge y) \vee x \]
$L$ is said to be distributive if we have the added identity
\[
x \wedge(y \vee z) = (x \wedge y) \vee (x \wedge z) \]
A {\em Heyting algebra} $H$ is a bounded distributive lattice $(H,0,1,\vee,\wedge)$ which is also a partially ordered set under $\leq$ and has a binary operation $\rightarrow$ satisfying the following set of axioms

(H1): $(x \rightarrow x) = 1$,

(H2): $x \wedge (x \rightarrow y) = x \wedge y$,

(H3): $y \wedge (x \rightarrow y) = y$,

(H4): $x \rightarrow (y \wedge z) = (x \rightarrow y) \wedge (x \rightarrow z)$.

\noindent
Equivalently, these axioms can be replaced by the following single axiom

(HA): $x \wedge y \leq z$ iff $x \leq y \rightarrow z$.

A Heyting algebra $H$ is said to be complete if every subset $\{ x_i : i \in I \}$ of $H$ has a supremum $\bigvee_i x_i$ and an infimum $\bigwedge_i x_i$, satisfying the infinite distributive law $\bigvee_i (y \wedge x_i) = y \wedge \bigvee_i x_i$. With $\mathbf{cHA}$ we denote the category of all join-complete Heyting algebras with morphisms the lattice, order preserving maps, preserving $0$ and $1$.

\vskip 5mm

 In \cite{KarinNCH} noncommutative Heyting algebras were introduced and studied.  A {\em skew lattice} is an algebra $(L,\wedge,\vee)$ where $\wedge$ and $\vee$ are idempotent and associative binary operations satisfying the identities
 \[
 x \wedge (x \vee y) = x = x \vee (x \wedge y) \quad \text{and} \quad (x \wedge y) \vee y = y = (x \vee y) \wedge y \]
 A skew lattice is {\em strongly distributive} if it satisfies the additional identities
 \[
 (x \vee y) \wedge z = (x \wedge z) \vee (y \wedge z) \quad \text{and} \quad x \wedge(y \vee z) = (x \wedge y) \vee (x \wedge z) \]

 Green's equivalence relation $\mathcal{D}$ on a skew lattice is defined via $x\, \mathcal D\, y$ iff $x \wedge y \wedge x = x$ and $y \wedge x \wedge y = y$. We will denote the $\mathcal{D}$-equivalence class of $x \in L$ by $\mathcal{D}_x$. The set of equivalence classes $L / \mathcal{D}$ with the induced operations is a distributive lattice and if $L / \mathcal{D}$ has a maximal element $1$ we call the corresponding $\mathcal{D}$-class in $L$ the set of {\em top elements} and denote it with $T$.

 A skew lattice has a natural partial order defined by $x \leq y$ iff $x \wedge y = x = y \wedge x$. With $x_{\downarrow}$ we will denote the subset consisting of all $y \in L$ such that $y \leq x$. 
By a result of Leech \cite{Leech}, $x_{\downarrow}$ is a distributive lattice for any $x$ in a strongly distributive skew lattice$S$. If  $S$ has a maximal element $1$ then $S=1_\downarrow$, which implies that $S$ is necessarily commutative. That is, we have to sacrifice a unique top element when passing to the noncommutative setting.
 
 \vskip 5mm

 From \cite[\S 3]{KarinNCH} we recall that a {\em noncommutative Heyting algebra} is an algebra $(H,\wedge,\vee,0,t)$ where $(H,\wedge,\vee,0)$ is a strongly distributive lattice with bottom $0$ and a top $\mathcal{D}$-class $T$, $t$ is a distinguished element of $T$ and $\rightarrow$ is a binary operation satisfying the following conditions
\begin{itemize}
\item[(NH1)] $x\rightarrow y = (y\lor(t\land  x\land t)\lor y)\rightarrow y$,
\item[(NH2)] $x\rightarrow x = x\lor t\lor x$,
\item[(NH3)]  $x\land (x\rightarrow y) \land x = x\land y \land x$, 
  \item[(NH4)] $y\land (x\rightarrow y)=y$ and  $(x\rightarrow y)\land y=y$,
  \item[(NH5)] $x \rightarrow (t\land (y\land z)\land t)=
  ( x\rightarrow (t\land y\land t)) \land (x \rightarrow (t \land  z\land t))$.
\end{itemize}

The main structural result on noncommutative Heyting algebras, \cite[Thm.\ 3.5]{KarinNCH} asserts that if $(H,\wedge,\vee,\rightarrow,0,t)$ is a noncommutative Heyting algebra, then
\begin{enumerate}
\item{$(t_{\downarrow},\wedge,\vee,\rightarrow,0,t)$ is a Heyting algebra with a unique top element $t$, isomorphic to $H / \mathcal{D}$.}
\item{For any $t' \in T$ also $(t'_{\downarrow},\wedge,\vee,\rightarrow,0,t')$ is a Heyting algebra and the map
\[
\phi~:~t_{\downarrow} \rTo t'_{\downarrow} \qquad x \mapsto t' \wedge x \wedge t' \]
is an isomorphism of Heyting algebras and for all $x \in t_{\downarrow}$ we have $x \,\mathcal D\, \phi(x)$.}
\end{enumerate}

From now on we will assume that the noncommutative Heyting algebra is {\em complete}, that is if all {\em commuting} subsets have suprema and infima in their partial ordering, and they satisfy the infinite distributive laws
\[
(\bigvee_i x_i) \wedge y = \bigvee_i (x_i \wedge y) \quad \text{and} \quad x \wedge (\bigvee_i y_i) = \bigvee_i (x \wedge y_i) \]
for all $x,y \in H$ and all commuting subsets $(x_i)_i$ and $(y_i)_i$.

With $\mathbf{ncHA}$ we denote the category with objects all complete noncommutative Heyting algebras and maps preserving $\leq$, $\wedge$, $\vee$, $\rightarrow$, $0$ and the distinguished top element $t$.

From \cite[Thm.\ 3.5.(iii)]{KarinNCH} we recall that Green's relation $\mathcal{D}$ is a congruence on a noncommutative Heyting algebra $H$ and that the Heyting algebra $H/\mathcal{D}$ is its maximal lattice image, that is, every noncommutative Heyting algebra morphism $H \rTo H_c$ to a (commutative) Heyting algebra $H_c$ factors through the quotient $\pi_{\mathcal{D}} : H \rOnto H/\mathcal{D}$. We can rephrase this as

\begin{lemma} Green's relation $\mathcal{D}$ induces a covariant functor
\[
/\mathcal{D}~:~\mathbf{ncHA} \rTo \mathbf{cHA} \qquad H \mapsto H/\mathcal{D} \]
\end{lemma}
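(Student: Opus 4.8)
The plan is to treat this as an almost formal consequence of the structural results recalled just above, in particular of the fact that $\pi_{\mathcal{D}} : H \rOnto H/\mathcal{D}$ is the universal morphism from $H$ to a commutative Heyting algebra. The object assignment $H \mapsto H/\mathcal{D}$ already lands in $\mathbf{cHA}$: by part (1) of the recalled structural result $H/\mathcal{D}$ is isomorphic to $t_{\downarrow}$, and since every subset of the commutative lattice $t_{\downarrow}$ is a commuting subset, the completeness of $H$ restricts to completeness of $t_{\downarrow}$, so $H/\mathcal{D}$ is a complete Heyting algebra. The real content is the definition of the functor on morphisms, and here I would invoke the recalled universal factorization property verbatim.

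Given a morphism $f : H \rTo H'$ in $\mathbf{ncHA}$, first I would form the composite $\pi_{\mathcal{D}'} \circ f : H \rTo H'/\mathcal{D}'$. Since $\mathcal{D}'$ is a congruence, $\pi_{\mathcal{D}'}$ is a noncommutative Heyting algebra morphism, so the composite is again one, and its target $H'/\mathcal{D}'$ is a commutative Heyting algebra. The recalled universal property then yields a unique morphism $f/\mathcal{D} : H/\mathcal{D} \rTo H'/\mathcal{D}'$ with $(f/\mathcal{D}) \circ \pi_{\mathcal{D}} = \pi_{\mathcal{D}'} \circ f$. That $f/\mathcal{D}$ is a genuine $\mathbf{cHA}$-morphism (preserving $\leq$, $\wedge$, $\vee$, $0$ and $1$) I would verify elementwise: because $\pi_{\mathcal{D}}$ is surjective, every element of $H/\mathcal{D}$ is of the form $\pi_{\mathcal{D}}(x)$, and then for instance $(f/\mathcal{D})(\pi_{\mathcal{D}}(x) \wedge \pi_{\mathcal{D}}(y)) = \pi_{\mathcal{D}'}(f(x \wedge y)) = (f/\mathcal{D})(\pi_{\mathcal{D}}(x)) \wedge (f/\mathcal{D})(\pi_{\mathcal{D}}(y))$, using that $f$, $\pi_{\mathcal{D}}$ and $\pi_{\mathcal{D}'}$ all preserve $\wedge$; the cases of $\vee$, $0$, $\leq$ are identical, and preservation of $1$ comes from $f(t_H) = t_{H'}$. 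Note that no clause is needed for $\rightarrow$ or for infinite joins, since $\mathbf{cHA}$-morphisms are only required to preserve the lattice structure and the bounds.

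Functoriality is then forced by the uniqueness clause together with the fact that $\pi_{\mathcal{D}}$ is an epimorphism, being surjective. For identities, $\id_{H/\mathcal{D}}$ satisfies $\id_{H/\mathcal{D}} \circ \pi_{\mathcal{D}} = \pi_{\mathcal{D}} \circ \id_H$, so by uniqueness $\id_H/\mathcal{D} = \id_{H/\mathcal{D}}$. For composition, given $f : H \rTo H'$ and $g : H' \rTo H''$, both $(g/\mathcal{D}) \circ (f/\mathcal{D})$ and $(g \circ f)/\mathcal{D}$ are $\mathbf{cHA}$-morphisms $H/\mathcal{D} \rTo H''/\mathcal{D}''$ whose precomposition with $\pi_{\mathcal{D}}$ equals $\pi_{\mathcal{D}''} \circ g \circ f$; for the first this is the short diagram chase $(g/\mathcal{D}) \circ (f/\mathcal{D}) \circ \pi_{\mathcal{D}} = (g/\mathcal{D}) \circ \pi_{\mathcal{D}'} \circ f = \pi_{\mathcal{D}''} \circ g \circ f$, so uniqueness gives their equality.

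I expect no serious obstacle here: all the hard work sits in the recalled results that $\mathcal{D}$ is a congruence and that $H/\mathcal{D}$ is the maximal commutative lattice image. The only points demanding care are bookkeeping ones, namely checking that $\pi_{\mathcal{D}'} \circ f$ really does satisfy the hypotheses of the universal property (that it is a morphism into a commutative algebra) and verifying that the induced map preserves exactly the operations that $\mathbf{cHA}$ requires, neither more nor less. Once surjectivity of $\pi_{\mathcal{D}}$ is in hand, uniqueness, and hence functoriality, is immediate.
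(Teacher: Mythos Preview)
Your proposal is correct and follows exactly the line the paper intends: the paper gives no separate proof of this lemma but simply states it as a rephrasing of the recalled fact that $\pi_{\mathcal{D}} : H \rOnto H/\mathcal{D}$ is the maximal lattice image, and your argument is precisely the standard unpacking of that universal property into a functor. Your write-up is more explicit than the paper's one-line justification, but the content is the same.
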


\vskip 3mm

\section{Noncommutative subobject classifiers}

Let $\mathbf{C}$ be a small category and $\mathbf{P}$ a presheaf on $\mathbf{C}$, that is, a contravariant functor $\mathbf{P} : \mathbf{C} \rTo \mathbf{Sets}$. We recall that subobjects of $\mathbf{P}$ correspond to natural transformations $N : \mathbf{P} \rTo \mathbf{\Omega}$ to the subobject classifier $\mathbf{\Omega}$, which is a presheaf of complete Heyting algebras on $\mathbf{C}$.

Motivated by this, we will consider the set $(\mathbf{P},\mathbf{H})$ of all natural transformations $N : \mathbf{P} \rTo \mathbf{H}$ to a presheaf $\mathbf{H}$ of noncommutative complete Heyting algebras on $\mathbf{C}$ and equip this set with a noncommutative Heyting algebra structure.

\vskip 3mm

Let $C$ be an object of $\mathbf{C}$. A sieve $S$ on $C$ is a set of morphisms in $\mathbf{C}$, all with codomain $C$, such that if $g \in S$ then $h \circ g \in S$ whenever this composition makes sense. With $\mathbf{\Omega}(C)$ we will denote the set of all sieves on $C$. If $S$ is a sieve on $C$ and $h : D \rTo C$ a morphism in $\mathbf{C}$, then
\[
h^*(S) = \{ g~|~codom(g)=D, h \circ g \in S \} \]
is a sieve on $D$. Hence, $\mathbf{\Omega}$ is a contravariant functor $\mathbf{\Omega} : \mathbf{C} \rTo \mathbf{Sets}$, that is, a presheaf on $\mathbf{C}$. In fact, as unions and intersections of sieves on $C$ are again sieves on $C$, each $\mathbf{\Omega}(C)$ is a complete Heyting algebra with bottom element $0 = \emptyset$ and unique maximal element $1 = \mathbf{y}(C)$ the set of all morphisms with codomain $C$. Moreover, for any $h : D \rTo C$ we have that $h^* : \mathbf{\Omega}(C) \rTo \mathbf{\Omega}(D)$ is a morphism of Heyting algebras. That is, we have a contravariant functor
\[
\mathbf{\Omega} : \mathbf{C} \rTo \mathbf{cHA} \]
to the category $\mathbf{cHA}$ of complete Heyting algebras. Assigning to each $C$ the maximal element $1 = \mathbf{y}(C)$ defines a global section of $\mathbf{\Omega}$
\[
true~:~\mathbf{1} \rTo \mathbf{\Omega} \]
which is the subobject classifier in $\widehat{\mathbf{C}}$, the topos of all presheaves of sets on $\mathbf{C}$., see \cite[p. 37-39]{MM}. That is, for every presheaf $\mathbf{P} \in \widehat{\mathbf{C}}$ there is a natural one-to-one correspondence between natural transformations $N : \mathbf{P} \rTo \mathbf{\Omega}$ and subobjects $\mathbf{Q}$ of $\mathbf{P}$ in $\widehat{\mathbf{C}}$, given by the pullback diagram
\[
\xymatrix{\mathbf{Q} \ar[r] \ar[d] & \mathbf{1} \ar[d]^{true} \\
\mathbf{P} \ar[r]_N & \mathbf{\Omega}} \]

\vskip 5mm

With this in mind, let us start with a presheaf $\mathbf{H}$ of noncommutative complete Heyting algebras on $\mathbf{C}$, that is, a contravariant functor
\[
\mathbf{H}~:~\mathbf{C} \rTo \mathbf{ncHA} \]
Every morphism $D \rTo^f C$ in $\mathbf{C}$ induces a morphism of noncommutative complete Heyting algebras
\[
H(f) : \mathbf{H}(C) \rTo \mathbf{H}(D) \]
and, in particular, it induces a map on the sets of top elements of these noncommutative Heyting algebras
\[
\mathbf{T}(f)~:~\mathbf{T}(C) = T(\mathbf{H}(C)) \rTo^{\mathbf{H}(f)} T(\mathbf{H}(D)) = \mathbf{T}(D) \]
That is, taking for every object $C$ in $\mathbf{C}$ the set of top elements $\mathbf{T}(C)$ of the noncommutative complete Heyting algebra $\mathbf{H}(C)$ is a presheaf of sets on $\mathbf{C}$, and the inclusions $\mathbf{T}(C) \subseteq \mathbf{H}(C)$ define a natural transformation 
\[
t_{\mathbf{H}}~:~\mathbf{T} \rTo \mathbf{H} \]

\begin{lemma} Let $\mathbf{P} \in \widehat{\mathbf{C}}$ and let $N,N' : \mathbf{P} \rTo \mathbf{H}$ be natural transformations, then the maps
\[
\begin{cases}
(N \wedge N')(C) : \mathbf{P}(C) \rTo \mathbf{H}(C) \qquad x \mapsto N(C)(x) \wedge N'(C)(x) \\
(N \vee N')(C) : \mathbf{P}(C) \rTo \mathbf{H}(C) \qquad x \mapsto N(C)(x) \vee N'(C)(x) \\
(N \rightarrow N')(C) : \mathbf{P}(C) \rTo \mathbf{H}(C) \qquad x \mapsto N(C)(x) \rightarrow N'(C)(x)
\end{cases}
\]
define natural transformation $N \wedge N', N \vee N', N \rightarrow N' : \mathbf{P} \rTo \mathbf{H}$.
\end{lemma}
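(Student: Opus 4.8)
The plan is to check the three assertions pointwise and uniformly: for each operation $\ast \in \{\wedge,\vee,\rightarrow\}$ I define $(N \ast N')(C)$ to be the composite prescribed in the statement, note that it is visibly a well-defined map $\mathbf{P}(C) \rTo \mathbf{H}(C)$ (since $N(C)(x)$ and $N'(C)(x)$ lie in the noncommutative Heyting algebra $\mathbf{H}(C)$, where $\ast$ is a binary operation), and then verify the single remaining condition, namely that the collection $\{(N \ast N')(C)\}_C$ is natural in $C$. Concretely, for a morphism $f : D \rTo C$ in $\mathbf{C}$ I must show that
\[
\mathbf{H}(f) \circ (N \ast N')(C) = (N \ast N')(D) \circ \mathbf{P}(f),
\]
both sides being maps $\mathbf{P}(C) \rTo \mathbf{H}(D)$.

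The verification then rests on two inputs, applied in order. First, $\mathbf{H}$ is a functor into $\mathbf{ncHA}$, so each transition map $\mathbf{H}(f) : \mathbf{H}(C) \rTo \mathbf{H}(D)$ is a morphism of noncommutative complete Heyting algebras and hence, by the very definition of $\mathbf{ncHA}$, preserves $\wedge$, $\vee$ and $\rightarrow$. Second, $N$ and $N'$ are themselves natural, so $\mathbf{H}(f) \circ N(C) = N(D) \circ \mathbf{P}(f)$ and likewise for $N'$. Evaluating at $x \in \mathbf{P}(C)$ and taking $\ast = \wedge$ as the representative case, I would compute
\[
\mathbf{H}(f)\bigl(N(C)(x) \wedge N'(C)(x)\bigr) = \mathbf{H}(f)(N(C)(x)) \wedge \mathbf{H}(f)(N'(C)(x)) = N(D)(\mathbf{P}(f)(x)) \wedge N'(D)(\mathbf{P}(f)(x)),
\]
where the first equality is the homomorphism property of $\mathbf{H}(f)$ and the second is naturality of $N$ and $N'$. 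The right-hand side is exactly $(N \wedge N')(D)(\mathbf{P}(f)(x))$, which is the required identity. The cases $\ast = \vee$ and $\ast = \rightarrow$ are formally identical, using only that $\mathbf{H}(f)$ also preserves $\vee$ and $\rightarrow$ respectively.

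There is no serious obstacle here: the entire content is that the operations are computed componentwise and that the morphisms in $\mathbf{ncHA}$ are by definition operation-preserving, so naturality of $N \ast N'$ is inherited from naturality of $N$ and $N'$. The only point that deserves a word of care is the justification that $\mathbf{H}(f)$ commutes with $\rightarrow$ as well as with $\wedge$ and $\vee$; this is precisely what it means for $\mathbf{H}(f)$ to be a map in $\mathbf{ncHA}$, and it is worth flagging that neither completeness of the $\mathbf{H}(C)$ nor the distinguished top element $t$ plays any role in this particular lemma. Thus the proof reduces to the displayed chain of equalities, written out once and then remarked to hold mutatis mutandis for the remaining two operations.
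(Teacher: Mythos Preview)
Your proposal is correct and follows essentially the same approach as the paper: verify naturality of $N \ast N'$ by using that $\mathbf{H}(f)$ is a morphism in $\mathbf{ncHA}$ (hence preserves $\wedge$, $\vee$, $\rightarrow$) together with the naturality of $N$ and $N'$, carrying out the computation for $\wedge$ and noting the other two cases are identical. Your additional remark that completeness and the distinguished top element are irrelevant here is accurate but not present in the paper's proof.
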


\begin{proof} For every morphism $D \rTo^f C$ in $\mathbf{C}$ we have to verify that the diagram below is commutative
\[
\xymatrix{\mathbf{P}(C) \ar[rr]^{(N \wedge N')(C)} \ar[d]_{\mathbf{P}(f)} & & \mathbf{H}(C) \ar[d]^{\mathbf{H}(f)} \\
\mathbf{P}(D) \ar[rr]_{(N \wedge N')(D)} & & \mathbf{H}(D)} \]
For every $x \in \mathbf{P}(C)$ we have that $\mathbf{H}(f)((N \wedge N')(C)(x)) =$
\[
\mathbf{H}(f)(N(C)(x) \wedge N'(C)(x)) = \mathbf{H}(f)(N(C)(x)) \wedge \mathbf{H}(f)(N'(C)(x)) \]
where the last equality follows from $\mathbf{H}(f)$ being a morphism of noncommutative complete Heyting algebras. Because $N$ and $N'$ are natural transformations, we have the equalities
\[
\mathbf{H}(f)(N(C)(x)) = N(D)(\mathbf{P}(f)(x)) \quad \text{and} \quad \mathbf{H}(f)(N'(C)(x)) = N'(D)(\mathbf{P}(f)(x)) \]
and so the term above is equal to
\[
N(D)(\mathbf{P}(f)(x)) \wedge N'(D)(\mathbf{P}(f)(x)) = (N \wedge N')(D)(\mathbf{P}(f)(x)) \]
The proofs for $N \vee N'$ and $N \rightarrow N'$ proceed similarly. 
\end{proof}

Every natural transformation $N : \mathbf{P} \rTo \mathbf{H}$ determines a pair $(\mathbf{Q},N)$ where  $\mathbf{Q}$ is a subobject of $\mathbf{P}$ via the pullback diagram
\[
\xymatrix{\mathbf{Q} \ar[r]^N \ar[d] & \mathbf{T} \ar[d]^{t_{\mathbf{T}}}  \\
\mathbf{P} \ar[r]_N & \mathbf{H} } \]
With $\mathbf{Sub}_{\mathbf{H}}(\mathbf{P})$ we denote the set of all such pairs $(\mathbf{Q},N)$ determined by a natural transformation $N : \mathbf{P} \rTo \mathbf{H}$.
 
 \begin{lemma}  \label{structures} On the poset $\mathbf{Sub}_{\mathbf{H}}(\mathbf{P})$ we can define operations
\[
\begin{cases}
(\mathbf{Q},N) \wedge (\mathbf{Q'},N') = (\mathbf{Q} \wedge \mathbf{Q'},N \wedge N') \\
(\mathbf{Q},N) \vee (\mathbf{Q'},N') = (\mathbf{Q} \vee \mathbf{Q'},N \vee N') \\
(\mathbf{Q},N) \rightarrow (\mathbf{Q'},N') = (\mathbf{Q} \rightarrow \mathbf{Q'},N \rightarrow N')
\end{cases}
\]
where we have the pull-back diagrams
\[
\xymatrix{\mathbf{Q}\wedge \mathbf{Q'} \ar[rr] \ar[d] & & \mathbf{T} \ar[d]^{t_{\mathbf{H}}} \\ \mathbf{P} \ar[rr]_{N \wedge N'} & & \mathbf{H}}  \quad
\xymatrix{\mathbf{Q} \vee \mathbf{Q'} \ar[rr] \ar[d] & & \mathbf{T} \ar[d]^{t_{\mathbf{H}}} \\ \mathbf{P} \ar[rr]_{N \vee N'} & & \mathbf{H}}  \quad
\xymatrix{\mathbf{Q} \rightarrow \mathbf{Q'} \ar[rr] \ar[d] & & \mathbf{T} \ar[d]^{t_{\mathbf{H}}} \\ \mathbf{P} \ar[rr]_{N \rightarrow N'} & & \mathbf{H}} 
\]
These operations turn the set $\mathbf{Sub}_{\mathbf{H}}(\mathbf{P})$ into a noncommutative complete Heyting algebra with minimal element $(\emptyset,N_0)$ and distinguished top element $(\mathbf{P},N_d)$, where the natural transformations $N_0,N_d : \mathbf{P} \rTo \mathbf{H}$ are the compositions 
\[
N_0 : \mathbf{P} \rTo \mathbf{1} \rTo^0 \mathbf{H} \quad \text{and} \quad N_d : \mathbf{P} \rTo \mathbf{1} \rTo^d \mathbf{H} \]
with the left-most morphism the unique map to the terminal object $\mathbf{1}$ and $d$ the global section of $\mathbf{H}$ determined by the distinguished elements. The top-elements are exactly the pairs $(\mathbf{P},N)$ where $N : \mathbf{P} \rTo \mathbf{T}$ is a natural transformation.
\end{lemma}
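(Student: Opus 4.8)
The plan is to reduce the whole statement to a purely pointwise computation. The key observation is that the assignment $(\mathbf{Q},N) \mapsto N$ is a bijection between $\mathbf{Sub}_{\mathbf{H}}(\mathbf{P})$ and the set $(\mathbf{P},\mathbf{H})$ of all natural transformations $N : \mathbf{P} \rTo \mathbf{H}$, since the subobject $\mathbf{Q}$ is completely determined by $N$ through its defining pullback. Under this bijection the three operations defined via the pullback diagrams correspond exactly to the pointwise operations $N \wedge N'$, $N \vee N'$ and $N \rightarrow N'$ produced by the previous lemma. Hence it suffices to show that $(\mathbf{P},\mathbf{H})$, equipped with these pointwise operations, is a complete noncommutative Heyting algebra with the stated bottom and distinguished top, and then to read off the corresponding statements for $\mathbf{Sub}_{\mathbf{H}}(\mathbf{P})$.

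First I would record that $(\mathbf{P},\mathbf{H})$ is a subalgebra of the direct product $\prod \mathbf{H}(C)$ taken over all pairs $(C,x)$ with $x \in \mathbf{P}(C)$: the operations are evaluated componentwise through $N(C)(x)$, the previous lemma guarantees that the result of each operation is again a natural transformation, and the constant transformations $N_0$ and $N_d$ (sending every $x$ to $0$, respectively to the distinguished top $d(C)$ of $\mathbf{H}(C)$) are the images of the product constants $0$ and $t$. Now every defining axiom of a noncommutative Heyting algebra — the strongly distributive skew lattice identities together with (NH1)--(NH5), as well as the identities forcing $0$ to be the bottom — is a universally quantified identity in the signature $(\wedge,\vee,\rightarrow,0,t)$. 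Each such identity holds in every $\mathbf{H}(C)$ and is therefore inherited componentwise, so it holds in the product and hence in the subalgebra $(\mathbf{P},\mathbf{H})$. This disposes of all the finitary structure and identifies $(\emptyset,N_0)$ as bottom and $(\mathbf{P},N_d)$ as the distinguished top element.

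Then I would identify the top $\mathcal{D}$-class. Because Green's relation $\mathcal{D}$ is itself defined by the two equations $x \wedge y \wedge x = x$ and $y \wedge x \wedge y = y$, and the operations are componentwise, one has $N \,\mathcal{D}\, N_d$ if and only if $N(C)(x) \,\mathcal{D}\, d(C)$ in $\mathbf{H}(C)$ for every object $C$ and every $x \in \mathbf{P}(C)$. As $d(C)$ is a top element of $\mathbf{H}(C)$, this holds precisely when each $N(C)(x)$ lies in the top $\mathcal{D}$-class $\mathbf{T}(C)$, that is, when $N$ factors through $t_{\mathbf{H}} : \mathbf{T} \rTo \mathbf{H}$. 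For such an $N$ the defining pullback gives $\mathbf{Q} = \mathbf{P}$, and conversely $\mathbf{Q} = \mathbf{P}$ forces $N(C)(x) \in \mathbf{T}(C)$ for all $x$; this yields the final claim that the top elements are exactly the pairs $(\mathbf{P},N)$ with $N : \mathbf{P} \rTo \mathbf{T}$.

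The hard part will be completeness. Given a commuting family $(N_i)_i$ in $(\mathbf{P},\mathbf{H})$ — commuting meaning that $N_i(C)(x)$ and $N_j(C)(x)$ commute in $\mathbf{H}(C)$ for all $i,j,C,x$ — I would define its supremum componentwise by $(\bigvee_i N_i)(C)(x) = \bigvee_i N_i(C)(x)$, each componentwise join existing by completeness of $\mathbf{H}(C)$. The point requiring care is that this componentwise supremum is again a natural transformation, i.e.\ that for every $f : D \rTo C$ the map $\mathbf{H}(f)$ carries $\bigvee_i N_i(C)(x)$ to $\bigvee_i N_i(D)(\mathbf{P}(f)(x))$; this is exactly the requirement that the transition maps $\mathbf{H}(f)$ preserve joins of commuting families, which is part of what it means for $\mathbf{H}$ to be a presheaf of \emph{complete} noncommutative Heyting algebras. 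Granting this, infima are handled symmetrically and the two infinite distributive laws hold in $(\mathbf{P},\mathbf{H})$ as componentwise instances of the corresponding laws in each $\mathbf{H}(C)$. I expect checking that these componentwise joins remain natural, rather than any of the purely equational axioms, to be the only genuine obstacle.
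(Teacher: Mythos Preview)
Your argument is correct and is, in substance, exactly what the paper has in mind: the paper's own proof is the single line ``Follows from the previous lemma and uniqueness of pull-backs,'' which is precisely your reduction step --- identify $\mathbf{Sub}_{\mathbf{H}}(\mathbf{P})$ with the set $(\mathbf{P},\mathbf{H})$ of natural transformations (uniqueness of pullbacks) and then use that the pointwise operations on $(\mathbf{P},\mathbf{H})$ are well-defined (the previous lemma). Everything after that in your write-up --- the subalgebra-of-a-product argument for the equational axioms, the explicit identification of the top $\mathcal{D}$-class, and the discussion of completeness --- is detail the paper simply omits; you are not taking a different route, you are filling in what the one-line proof leaves implicit.

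One remark worth keeping: your observation that completeness of $(\mathbf{P},\mathbf{H})$ hinges on the transition maps $\mathbf{H}(f)$ preserving joins of commuting families is a genuine point that the paper does not isolate. The paper's definition of morphisms in $\mathbf{ncHA}$ only says they preserve $\leq,\wedge,\vee,\rightarrow,0,t$, and it is not made explicit whether $\vee$ here includes the infinitary case; your proof makes clear that this is exactly what is needed, and it is a reasonable reading of ``presheaf of complete noncommutative Heyting algebras.'' So your version is, if anything, more honest about the one nontrivial step than the paper's.
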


\begin{proof} Follows from the previous lemma and uniqueness of pull-backs.
\end{proof}

 \begin{definition} A presheaf $\mathbf{H}$ of noncommutative complete Heyting algebras on $\mathbf{C}$ is said to be a {\em noncommutative subobject classifier} if $\mathbf{H}/\mathcal{D} \simeq \mathbf{\Omega}$.
 \end{definition}
 
 \begin{lemma} If $\mathbf{H}$ is a noncommutative subobject classifier, then for every presheaf $\mathbf{P}$ on $\mathbf{C}$, we have a surjective  morphism of (noncommutative) complete Heyting algebras
 \[
\mathbf{Sub}_{\mathbf{H}}(\mathbf{P}) \rOnto \mathbf{Sub}(\mathbf{P}) \]
 \end{lemma}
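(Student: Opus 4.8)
The plan is to realize the surjection as the commutative--shadow quotient of \cite[Thm.\ 3.5.(iii)]{KarinNCH} applied to $\mathbf{Sub}_{\mathbf{H}}(\mathbf{P})$, read through the defining isomorphism $\mathbf{H}/\mathcal{D} \simeq \mathbf{\Omega}$ of a noncommutative subobject classifier. Composing the quotient $\pi_{\mathcal{D}}$ with that isomorphism yields a natural transformation $\pi : \mathbf{H} \rTo \mathbf{\Omega}$ which is, objectwise, a morphism of complete Heyting algebras. Using the classical correspondence between $\mathbf{Sub}(\mathbf{P})$ and natural transformations $\mathbf{P} \rTo \mathbf{\Omega}$, I would define
\[
\Phi : \mathbf{Sub}_{\mathbf{H}}(\mathbf{P}) \rTo \mathbf{Sub}(\mathbf{P}), \qquad (\mathbf{Q},N) \mapsto \big(\text{subobject classified by } \pi \circ N\big).
\]
Note $\Phi$ depends on the pair only through $N$, and in fact only through $\pi \circ N$.

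First I would verify that $\Phi$ is a morphism. The operations on $\mathbf{Sub}_{\mathbf{H}}(\mathbf{P})$ are computed objectwise through $\wedge,\vee,\rightarrow$ on $\mathbf{H}$ by Lemma~\ref{structures}, while those on $\mathbf{Sub}(\mathbf{P})$ are computed objectwise through the corresponding operations on $\mathbf{\Omega}$. Since each $\pi(C)$ is a Heyting morphism, one gets $\pi \circ (N \wedge N') = (\pi\circ N)\wedge(\pi\circ N')$ and likewise for $\vee$ and $\rightarrow$; evaluating on the distinguished section $d$ shows $\pi\circ N_d$ classifies the top subobject $\mathbf{P}$ and $\pi\circ N_0$ classifies $\emptyset$, so bottom and top are preserved, and preservation of commuting suprema and infima follows the same way from the completeness clause. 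This simultaneously identifies $\Phi$ with the shadow quotient $\mathbf{Sub}_{\mathbf{H}}(\mathbf{P}) \rOnto \mathbf{Sub}_{\mathbf{H}}(\mathbf{P})/\mathcal{D}$, once one observes that $(\mathbf{Q},N) \Drel (\mathbf{Q'},N')$ holds precisely when $\pi\circ N = \pi\circ N'$.

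The substantive part is surjectivity: every $M : \mathbf{P} \rTo \mathbf{\Omega}$ must be lifted to some $N : \mathbf{P} \rTo \mathbf{H}$ with $\pi \circ N = M$. The idea is to build a natural section $s : \mathbf{\Omega} \simeq \mathbf{H}/\mathcal{D} \rTo \mathbf{H}$ of $\pi$ out of the distinguished global section $d$. Writing $t_C = d(C)$, Theorem~\cite[Thm.\ 3.5]{KarinNCH} tells us that $\pi_{\mathcal{D}}$ restricts to a Heyting isomorphism $(t_C)_{\downarrow} \simeq \mathbf{H}(C)/\mathcal{D}$; I take $s(C)$ to be its inverse, so that $s(C)$ selects in each $\mathcal{D}$-class the unique representative below $t_C$. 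Then $N = s\circ M$ satisfies $\pi\circ N = M$, whence $\Phi$ is onto.

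The delicate point, and the one I would treat most carefully, is the naturality of $s$: for each $f : D \rTo C$ the square relating $\mathbf{H}(f)$ to the two sections must commute. This rests on two facts holding because $\mathbf{H}(f)$ is a morphism in $\mathbf{ncHA}$: it preserves the distinguished top element, so $\mathbf{H}(f)(t_C) = t_D$ and hence $\mathbf{H}(f)$ carries $(t_C)_{\downarrow}$ into $(t_D)_{\downarrow}$; and it respects the congruence $\mathcal{D}$, so it commutes with $\pi_{\mathcal{D}}$. Together these force $\mathbf{H}(f)$ to send the distinguished representative of a $\mathcal{D}$-class to the distinguished representative of its image, which is exactly $\mathbf{H}(f)\circ s(C) = s(D)\circ(\mathbf{H}(f)/\mathcal{D})$. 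Once naturality of $s$ is secured, the remaining verifications are formal.
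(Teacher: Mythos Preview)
Your proposal is correct and follows essentially the same approach as the paper: the forward map is composition with the quotient $\pi=\,./\mathcal{D}:\mathbf{H}\to\mathbf{\Omega}$, and surjectivity is obtained via the natural section $s:\mathbf{\Omega}\to\mathbf{H}$ picking the unique $\mathcal{D}$-representative below the distinguished top (the paper calls this section $i$). You are more thorough than the paper in verifying that $\Phi$ is a Heyting morphism and that $s$ is natural, but the underlying argument is identical.
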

 
 \begin{proof} The map is determined by sending a pair $(\mathbf{Q},N)$ to $\mathbf{Q}$. Or, equivalently, by composing with the quotient map of noncommutative complete Heyting algebras dividing out Green's relation
 \[
 \xymatrix{\mathbf{Q} \ar[r]^N \ar[d] & \mathbf{T} \ar[r] \ar[d]^{t_{\mathbf{H}}} & \mathbf{1} \ar[d] \\
 \mathbf{P} \ar[r]_N & \mathbf{H} \ar[r]_{./\mathcal{D}} & \mathbf{\Omega}} \]
 Let $d : \mathbf{1} \rTo \mathbf{H}$ be the global section corresponding to the distinguished top element, then the maps (of noncommutative complete Heyting algebras)
 \[
 \mathbf{\Omega}(C) \rTo^{\simeq} d(C)(1)_{\downarrow} \rInto \mathbf{H}(C) \]
 determine a natural transformation $\mathbf{\Omega} \rTo^i \mathbf{H}$. If $\mathbf{Q}$ is the subobject of $\mathbf{P}$ corresponding to the natural transformation $N : \mathbf{P} \rTo \mathbf{\Omega}$ then the composition $i \circ N$ is an element of $(\mathbf{P},\mathbf{H})$ mapping to $\mathbf{Q}$.
 \end{proof}

\section{Noncommutative Grothendieck topologies}

 In this section we will introduce noncommutative Grothendieck topologies and their corresponding toposes of sheaves. We will first extend the notion of Lawvere-Tierney topologies, which are certain closure operations on $\mathbf{\Omega}$, to noncommutative subobject classifiers. As Lawvere-Tierney topologies coincide with Grothendieck topologies when the category $\mathbf{C}$ is small, we will then determine the corresponding noncommutative Grothendieck topologies and define sheaves over them.
 
 \vskip 3mm
 
 A {\em Lawvere-Tierney topology} on $\widehat{\mathbf{C}}$, see for example \cite[V.\S 1]{MM}, is a natural transformation $j : \mathbf{\Omega} \rTo \mathbf{\Omega}$ satisfying the following three properties

(LT1): $j \circ \text{true} = \text{true}$;

(LT2): $j \circ j = j$;

(LT3): $j \circ \wedge = \wedge \circ (j \times j)$.

\[
\xymatrix{\mathbf{1} \ar[r]^{true} \ar[rd]_{true} & \mathbf{\Omega} \ar[d]^{j} \\ & \mathbf{\Omega}}~\qquad
\xymatrix{\mathbf{\Omega} \ar[r]^{j} \ar[rd]_{j} & \mathbf{\Omega} \ar[d]^{j} \\ & \mathbf{\Omega}}~\qquad
\xymatrix{\mathbf{\Omega} \times \mathbf{\Omega} \ar[r]^{\wedge} \ar[d]_{j \times j} & \mathbf{\Omega} \ar[d]^{j} \\ \mathbf{\Omega} \times \mathbf{\Omega} \ar[r]_{\wedge} & \mathbf{\Omega}}  \]
 
 Motivated by this we define, for a noncommutative subobject classifier $\mathbf{H}$ with presheaf of top-elements $t_{\mathbf{T}} : \mathbf{T} \rTo \mathbf{H}$, a {\em noncommutative Lawvere topology} to be a natural transformation (of presheaves of sets)
\[
j_{\mathbf{H}} : \mathbf{H} \rTo \mathbf{H} \]
satisfying the properties

(NLT1) : $j_{\mathbf{H}} \circ t_{\mathbf{H}} = t_{\mathbf{H}}$,

(NLT2) : $j_{\mathbf{H}} \circ j_{\mathbf{H}} = j_{\mathbf{H}}$,

\[
\xymatrix{\mathbf{T} \ar[r]^{t_{\mathbf{H}}} \ar[rd]_{t_{\mathbf{H}}} & \mathbf{H} \ar[d]^{j_{\mathbf{H}}} \\ & \mathbf{H}}~\qquad
\xymatrix{\mathbf{H} \ar[r]^{j_{\mathbf{H}}} \ar[rd]_{j_{\mathbf{H}}} & \mathbf{H} \ar[d]^{j_{\mathbf{H}}} \\ & \mathbf{H}}~\qquad
\]
and where we replace the third commuting diagram by

(NLT3) : For every object $C$ in $\mathbf{C}$, every top-element $t \in \mathbf{T}(C)$ and all $x,y \in t_{\downarrow} \subset \mathbf{T}(C)$ we have
 the condition
\[
j_{\mathbf{H}}(C)(x \wedge y) = j_{\mathbf{H}}(C)(x) \wedge j_{\mathbf{H}}(C)(y) \]

\begin{lemma} A noncommutative Lawvere topology $j_{\mathbf{H}} : \mathbf{H} \rTo \mathbf{H}$  induces for every presheaf $\mathbf{P}$ a closure operator on the noncommutative complete Heyting algebra $\mathbf{Sub}_{\mathbf{H}}(\mathbf{P})$.
\end{lemma}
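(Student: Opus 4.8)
The plan is to imitate the construction of the universal closure operation associated to a Lawvere--Tierney topology, replacing $\mathbf{\Omega}$ by $\mathbf{H}$ throughout. For a pair $(\mathbf{Q},N)\in\mathbf{Sub}_{\mathbf{H}}(\mathbf{P})$ I set $c(\mathbf{Q},N)=(\overline{\mathbf{Q}},\,j_{\mathbf{H}}\circ N)$, where $j_{\mathbf{H}}\circ N:\mathbf{P}\to\mathbf{H}$ is again a natural transformation and $\overline{\mathbf{Q}}$ is the subobject of $\mathbf{P}$ obtained by pulling back $t_{\mathbf{H}}:\mathbf{T}\to\mathbf{H}$ along it. This is well defined because the classifying map $j_{\mathbf{H}}\circ N$ determines $(\overline{\mathbf{Q}},j_{\mathbf{H}}\circ N)$ uniquely, by the same uniqueness-of-pullbacks argument used in Lemma~\ref{structures}. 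Since the lattice operations and the partial order on $\mathbf{Sub}_{\mathbf{H}}(\mathbf{P})$ are computed componentwise from the $N$-part (Lemma~\ref{structures} and the lemma preceding it), every closure-operator axiom for $c$ reduces to a pointwise statement about the single map $j=j_{\mathbf{H}}(C):\mathbf{H}(C)\to\mathbf{H}(C)$ for each object $C$.

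Idempotence is immediate: $c(c(\mathbf{Q},N))$ is classified by $j_{\mathbf{H}}\circ j_{\mathbf{H}}\circ N=j_{\mathbf{H}}\circ N$ by (NLT2), and equal classifying maps give equal pullbacks, so $c\circ c=c$. For monotonicity I would reduce to showing that $a\le b$ in $\mathbf{H}(C)$ implies $j(a)\le j(b)$. Using the structure theorem \cite[Thm.\ 3.5]{KarinNCH}, any $b$ lies below a top element $t$ (for instance $t=b\vee t_0\vee b$ with $t_0$ any top element), and then $a\le b\le t$ puts $a,b$ in the single distributive lattice $t_{\downarrow}$; applying (NLT3) to $a\wedge b=a$ and to $b\wedge a=a$ yields $j(a)=j(a)\wedge j(b)$ and $j(a)=j(b)\wedge j(a)$, i.e.\ $j(a)\le j(b)$. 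An analogous application of (NLT3) inside a common $t_{\downarrow}$ yields the meet-stability clause of a closure operator, with the general case reducing to the shadow computation used below.

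The heart of the proof --- and the step I expect to be the main obstacle --- is extensiveness, i.e.\ $(\mathbf{Q},N)\le c(\mathbf{Q},N)$, which by the componentwise description amounts to the pointwise inequality $a\le j(a)$ in the skew order of $\mathbf{H}(C)$ for every $a$. This does \emph{not} follow formally from (NLT1)--(NLT3): on an abstract Heyting algebra an operator that fixes the top, is idempotent and preserves meets need not be inflationary, so the representability of $\mathbf{H}$ as a noncommutative subobject classifier must be used. My plan is to pass to the commutative shadow. First I would check that $j_{\mathbf{H}}$ descends to a map $\overline{j}:\mathbf{\Omega}\to\mathbf{\Omega}$ on $\mathbf{H}/\mathcal{D}\simeq\mathbf{\Omega}$; since the isomorphism $t_{\downarrow}\simeq\mathbf{H}/\mathcal{D}$ of \cite[Thm.\ 3.5]{KarinNCH} is surjective, (NLT1) and (NLT3) then give (LT1) and (LT3) for $\overline{j}$, and (NLT2) gives (LT2), so $\overline{j}$ is an honest Lawvere--Tierney topology and is therefore inflationary by the classical theory \cite[\S V.1]{MM}: $[a]\le[j(a)]$ in $\mathbf{H}/\mathcal{D}$. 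To upgrade this $\mathcal{D}$-class inequality to the sharper skew inequality $a\le j(a)$, I would fix a top element $t\ge a$ and note that (NLT1) and (NLT3) force $j(a)=j(a)\wedge t\in t_{\downarrow}$, so that $a$ and $j(a)$ both lie in the single commutative lattice $t_{\downarrow}$; there $\mathcal{D}$ is trivial and the order-isomorphism $t_{\downarrow}\simeq\mathbf{H}/\mathcal{D}$ turns $[a]\le[j(a)]$ back into $a\le j(a)$.

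The delicate point inside this last step, and the one deserving the most care, is the well-definedness of $\overline{j}$, i.e.\ that $j_{\mathbf{H}}$ respects Green's relation $\mathcal{D}$ on each $\mathbf{H}(C)$; since distinct top elements are $\mathcal{D}$-incomparable there is no single $t_{\downarrow}$ containing two $\mathcal{D}$-related representatives, so this equivariance cannot be read off from (NLT3) alone and will have to be extracted either from the naturality of $j_{\mathbf{H}}$ together with the transport isomorphisms $x\mapsto t'\wedge x\wedge t'$ of \cite[Thm.\ 3.5]{KarinNCH}, or by restricting attention to the top elements in the image of a global section $g:\mathbf{1}\to\mathbf{T}$, for which $g_{\downarrow}\simeq\mathbf{\Omega}$ and $j_{\mathbf{H}}$ restricts to a genuine Lawvere--Tierney topology on $\widehat{\mathbf{C}}$, hence is inflationary. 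Once extensiveness is secured the closure axioms all hold, and $c$ is the desired closure operator on $\mathbf{Sub}_{\mathbf{H}}(\mathbf{P})$.
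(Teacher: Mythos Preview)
Your construction of the closure $c(\mathbf{Q},N)=(\overline{\mathbf{Q}},\,j_{\mathbf{H}}\circ N)$ is exactly the one the paper uses, and your argument for idempotence via (NLT2) is the same.  Where you diverge is in the treatment of extensiveness and in the scope of what you try to prove.

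The paper's proof is much shorter than yours.  It checks only two things: idempotence from (NLT2), and the inclusion $\mathbf{Q}\subseteq\overline{\mathbf{Q}}$, the latter by a single pullback argument.  Concretely, from (NLT1) one has $j_{\mathbf{H}}\circ t_{\mathbf{H}}=t_{\mathbf{H}}$, so the inner pullback square defining $\mathbf{Q}$ sits inside the outer square defining $\overline{\mathbf{Q}}$ (with $\mathrm{id}_{\mathbf{T}}$ on the right and $\mathrm{id}_{\mathbf{P}}$ on the left); the universal property of the outer pullback then produces the dashed map $\mathbf{Q}\to\overline{\mathbf{Q}}$.  That is the whole extensiveness argument in the paper.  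Your long detour through the commutative shadow $\mathbf{H}/\mathcal{D}\simeq\mathbf{\Omega}$, the induced $\overline{j}$, the classical inflationary property of Lawvere--Tierney topologies, and the transport back to a single $t_{\downarrow}$ never appears; nor does the delicate issue you flag about $j_{\mathbf{H}}$ respecting $\mathcal{D}$.

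Two further remarks.  First, you prove strictly more than the paper: monotonicity and meet-stability (your use of (NLT3) inside a common $t_{\downarrow}$) are not verified in the paper's proof at all, which is content with ``extensive $+$ idempotent''.  Second, your worry that the abstract axioms (NLT1)--(NLT3) do not force $a\le j(a)$ is well taken in the abstract, but the paper sidesteps it entirely by working diagrammatically at the level of subobjects rather than pointwise in $\mathbf{H}(C)$; whether the resulting inequality $(\mathbf{Q},N)\le\overline{(\mathbf{Q},N)}$ holds in the \emph{skew} natural order (which would indeed require your pointwise $a\le j(a)$) or only at the subobject level is a point the paper does not address, so your more careful analysis is a genuine strengthening rather than a mere complication.
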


\begin{proof} Let $N : \mathbf{P} \rTo \mathbf{H}$ be a natural transformation and consider the inner pullback square
\[
\xymatrix{\overline{\mathbf{Q}} \ar[rrr] \ar[ddd] & & & \mathbf{T} \ar[ddd] \\
& \mathbf{Q} \ar[r] \ar[d] \ar@{.>}[lu] & \mathbf{T} \ar[ru]^{id} \ar[d] & \\
& \mathbf{P} \ar[r]^N \ar[ld]_{id} & \mathbf{H} \ar[rd]^{j_{\mathbf{H}}} & \\
\mathbf{P} \ar[rrr]_{j_{\mathbf{H}} \circ N} & & & \mathbf{H} } \]
then the composed morphism $j_{\mathbf{H}} \circ N$ gives the outer square, and hence determines an element in $\mathbf{Sub_{\mathbf{H}}}(\mathbf{P})$
\[
\overline{(\mathbf{Q},N)} = ( \overline{\mathbf{Q}}, j_{\mathbf{H}} \circ N) \]
The dashed morphism exists because the outer square is a pullback diagram, and hence we have $\mathbf{Q} \subseteq \overline{\mathbf{Q}}$ and therefore
\[
(\mathbf{Q},N) \leq \overline{(\mathbf{Q},N)} \quad \text{and} \quad \overline{\overline{(\mathbf{Q},N)}} = \overline{(\mathbf{Q},N)} \]
where the latter follows from $j_{\mathbf{H}} \circ j_{\mathbf{H}} = j_{\mathbf{H}}$ .
\end{proof}

Recall that a {\em Grothendieck topology} on $\mathbf{C}$, see for example \cite[III.\S 2]{MM}, is a function $J$ which assigns to each object $C$ a collection $J(C)$ of sieves on $C$, satisfying the following requirements

(GT1): the maximal sieve $\mathbf{y}(C) = \{ f~|~codom(f)=C~\} \in J(C)$;

(GT2): if $S \in J(C)$, then $h^*(C) \in J(D)$ for all arrows $h : D \rTo C$,

(GT3): if $R$ is a sieve on $C$ such that $h^*(R) \in J(D)$ for all $h : D \rTo C \in S \in J(C)$, then $R \in J(C)$.

\vskip 3mm

If $\mathbf{C}$ is a small category, Lawvere-Tierney topologies on $\widehat{\mathbf{C}}$ are in one-to-one correspondence with Grothendieck topologies on $\mathbf{C}$, see for example \cite[Thm.\ V.4.1]{MM}. One recovers the collection $J(C)$ from a Lawvere-Tierney topology $j$ as the set of all sieves $S$ on $C$ such that $j(S) = \mathbf{y}(C)$ in $\mathbf{\Omega}(C)$.

\vskip 3mm

Let us specify the construction of $\mathbf{Sub}_{\mathbf{H}}(\mathbf{P})$ for the presheaf $\mathbf{P}=\mathbf{y}C$ determined by
\[
\mathbf{y}C~:~\mathbf{C} \rTo \mathbf{Sets} \qquad D \mapsto Hom_{\mathbf{C}}(D,C) \]
Note that the subobjects of $\mathbf{y}C$ are exactly the sieves $S$ on $C$ and that by Yoneda's lemma every natural transformation $N : \mathbf{y}C \rTo  \mathbf{H}$ determines (and is determined by) $x = N(C)(id_C) \in \mathbf{H}(C)$. Conversely, every element $x \in \mathbf{H}(C)$ determines the pull-back diagram
\[
\xymatrix{S \ar[rr] \ar[d] & & \mathbf{T} \ar[d]^{t_{\mathbf{H}}} \\ \mathbf{y}C \ar[rr]_x & & \mathbf{H}} \]
where $S$ is the sieve on $C$ specified by
\[
S = \{ D \rTo^f C~:~\mathbf{H}(f)(x) \in \mathbf{T}(D) \} \]
Observe that $S$ is indeed a sieve as the maps $\mathbf{H}(g)$ for $E \rTo^g D$ induce a map on the top-elements $\mathbf{T}(D) \rTo \mathbf{T}(E)$. Therefore,
\[
\mathbf{Sub}_{\mathbf{H}}(\mathbf{y}C) = \{ (S,x) \in \mathbf{\Omega}(C) \times \mathbf{H}(C)~|~S= \{ D \rTo^f C~:~\mathbf{H}(f)(x) \in \mathbf{T}(D) \} \} \]
We have seen that $\mathbf{Sub}_{\mathbf{H}}(\mathbf{y}C)$ is a noncommutative complete Heyting algebra, having as its set of top-elements
\[
T(\mathbf{Sub}_{\mathbf{H}}(\mathbf{y}C)) = \{ (\mathbf{y}(C),t)~|~t \in \mathbf{T}(C) \} \]
and with minimal element $(\emptyset,0)$. If $j_{\mathbf{H}} : \mathbf{H} \rTo \mathbf{H}$ is a noncommutative Lawvere topology, the corresponding closure operation on $\mathbf{Sub}_{\mathbf{H}}(\mathbf{y}C)$ can be specified as
\[
\overline{(S,x)} = (\overline{S},j_{\mathbf{H}}(C)(x)) \quad \text{with} \quad \overline{S} = \{ D \rTo^f C~:~\mathbf{T}(f)(j_{\mathbf{H}}(C)(x)) \in \mathbf{T}(D) \} \]
Motivated by the above correspondence between Lawvere-Tierney and Grothendieck topologies, we can now define:

\begin{definition} Let $j_{\mathbf{H}} : \mathbf{H} \rTo \mathbf{H}$ be a noncommutative Lawvere topology, then the corresponding {\em noncommutative Grothendieck topology} $J_{\mathbf{H}}$ assigns to every object $C$ of $\mathbf{C}$ the collection of elements from $\mathbf{Sub}_{\mathbf{H}}(\mathbf{y}C)$
\[
J_{\mathbf{H}}(C) = \{ (S,x) \in \mathbf{\Omega}(C) \times \mathbf{H}(C)~|~(S,x) \in \mathbf{Sub}_{\mathbf{H}}(\mathbf{y}C)~\text{and}~ j_{\mathbf{H}}(C)(x) \in \mathbf{T}(C) \} \]
\end{definition}

If $J$ is a Grothendieck topology on $\mathbf{C}$ then a presheaf $\mathbf{P}$ of sets on $\mathbf{C}$ is called a sheaf for $J$ if and only if for every object $C$ of $\mathbf{C}$, every sieve $S \in J(C)$ (considered as a subobject of $\mathbf{y}C$) and every natural transformation $g : S \rTo \mathbf{P}$, there is a unique natural transformation $\mathbf{y}C \rTo \mathbf{P}$ making the diagram below commute
\[
\xymatrix{& \mathbf{y}C \ar@{.>}[rd]^{\exists !} & \\ S \ar[ru] \ar[rr]^g \ar[rd] & & \mathbf{P} \ar[ld]  \\ & \mathbf{1}} \]
Clearly, the canonical bottom maps to the terminal object $\mathbf{1}$ are superfluous in the definition, but they may help to motivate the definition below.

\vskip 3mm

Let $\mathbf{H}$ be a noncommutative subobject generator with presheaf of top-elements $\mathbf{T}$ and let $j_{\mathbf{H}} : \mathbf{H} \rTo \mathbf{H}$ be a noncommutative Lawvere topology, then the corresponding noncommutative Grothendieck topology $J_{\mathbf{H}}$ assigns to every object $C$ a collection $J_{\mathbf{H}}(C)$ of couples $(S,x)$ where $S$ is a subobject of $\mathbf{y}C$ and $x : S \rTo \mathbf{T}$ is a natural transformation which is the restriction to $S$ of a natural transformation $x : \mathbf{y}C \rTo \mathbf{H}$ determined by $x \in \mathbf{H}(C)$.

So, instead of the canonical morphism $S \rTo \mathbf{1}$ we have to consider certain morphisms $x : S \rTo \mathbf{T}$. Therefore it makes sense to define the category of all presheaves with respect to the noncommutative Grothendieck topology $J_{\mathbf{H}}$ to be the slice category $\widehat{\mathbf{C}}/\mathbf{T}$. That is, the objects are pairs $(\mathbf{F},\pi_{\mathbf{F}})$ with $\mathbf{F} \in \widehat{\mathbf{C}}$ and $\pi_{\mathbf{F}}$ a natural transformation $\mathbf{F} \rTo \mathbf{T}$, and morphisms compatible natural transformations $g$
\[
\xymatrix{\mathbf{F} \ar[d]_{\pi_{\mathbf{F}}} \\ \mathbf{T}} \qquad \xymatrix{\mathbf{F} \ar[rd]_{\pi_{\mathbf{F}}} \ar[rr]^g & & \mathbf{G} \ar[ld]^{\pi_{\mathbf{G}}} \\ & \mathbf{T} &} \]

\begin{definition} A presheaf $(\mathbf{F},\pi_{\mathbf{F}})$ is a sheaf with respect to the noncommutative Grothendieck topology $J_{\mathbf{H}}$ if and only if for every object $C$ of $\mathbf{C}$, every element $(S,x) \in J_{\mathbf{H}}(C)$, and every morphism $g$ in $\widehat{\mathbf{C}}/\mathbf{T}$
\[
\xymatrix{ & \mathbf{y} C \ar@{.>}[rd]^{\exists !} & \\ S \ar[rr]^g \ar[rd]_x \ar[ru] & & \mathbf{F} \ar[ld]^{\pi_{\mathbf{F}}} \\
& \mathbf{T} & } \]
there is a unique morphism $\mathbf{y} C \rTo \mathbf{F}$ in $\widehat{\mathbf{C}}$. Here $S \rTo^x \mathbf{T}$ is the pull-back map induced by the natural transformation $x : \mathbf{y} C \rTo \mathbf{H}$.

The {\em noncommutative topos} $\mathbf{Sh}(\mathbf{C},J_{\mathbf{H}})$ has as its objects all sheaves with respect to the noncommutative Grothendieck topology $J_{\mathbf{H}}$ and morphisms as in $\widehat{\mathbf{C}}/\mathbf{T}$.
\end{definition}

\section{A class of examples}

In this section we will construct examples of noncommutative subobject classifiers and show that a noncommutative topos does not have to be an elementary topos.

\vskip 3mm

First, we will construct complete noncommutative  Heyting algebras. By a result of \cite{KarinNCH} complete noncommutative  Heyting algebras are exactly noncommutative frames (together with a distinguished element in the top $\mathcal D$-class), where a \emph{noncommutative frame} is a strongly distributive, join complete skew lattice that satisfies the infinite distributive laws. 

Let $h$ be a (commutative) complete Heyting algebra. 
Since $h$ is a distributive lattice it embeds into $\prod_{i\in I} \mathbf 2$ for some index set $I$, where $\mathbf 2$ is the two element lattice
\[
\mathbf{2} = \xymatrix{1 \ar@{-}[d] \\ 0} \qquad \text{and define} \qquad \widehat{P} = \xymatrix{& p \ar@{.}[l] \ar@{.}[r] \ar@{-}[d] & \\ & 0 &} \]
to be the skew lattice on $\widehat{P} = \{ 0 \} \cup P$, with a unique bottom element $0$ and a set $P$ of top elements, and operations are defined by:
\[x,y\in P : x\land y=x, \qquad x\lor y=y.
\]
\[x\land 0=0=0\land x, \qquad x\lor 0=x=0\lor x,
\]
Note that $\widehat{P}$ is a strongly distributive skew lattice and has two $\mathcal D$-classes:  bottom class $\{0\}$ and  top class $P$, whence $\widehat{P}/\mathcal{D} \simeq \mathbf{2}$.

Let $H$ be the pullback (in $\mathbf{Sets}$) of the following diagram: 
\[
\xymatrix{H \ar[rr] \ar[d] & & \prod_{i\in I} \widehat{P} \ar[d]^{/\mathcal D} \\ 
h\ar[rr]^i & & \prod_{i\in I} \mathbf 2 }
\]
Denoting by $\pi_i$ the projection to the $i$-th factor we obtain a commutative diagram:
\begin{equation}\label{eq:diag-lemma6}
\xymatrix{H \ar[rr]^{\pi_i} \ar[d] & &  \widehat{P} \ar[d]^{/\mathcal D} \\ 
h\ar[rr] & & \mathbf 2 }
\end{equation}

\begin{lemma} With notations as above, $H$ becomes a noncommutative frame with bottom $0$ and top $\mathcal{D}$-class $T(H) = \prod_{i\in I} P$ under the operations
\[
(x_i)_i\wedge (y_i)_i= (x_i \wedge y_i)_i\qquad \text{and} \qquad (x_i)_i \vee (y_i)_i = (x_i \vee y_i)_i \]
where the bracketed operations are performed in the skew lattice $\widehat{P}$. In particular, $H/\mathcal{D} \simeq h$. If we fix a distinguished element $d\in H$ s.t.  $\pi_i(d)\neq 0$ for all $i\in I$
then $H$ is a complete noncommutative Heyting algebra.
\end{lemma}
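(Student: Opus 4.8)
The plan is to realise $H$ as a sub-skew-lattice of the product $\prod_{i\in I}\widehat{P}$ and to transport structure from there. Since the pullback is taken in $\mathbf{Sets}$ while the two legs of the cospan are homomorphisms — the quotient $/\mathcal{D}:\widehat{P}\rTo\mathbf{2}$ is a lattice map and $i$ preserves finite meets and joins — I would first check that the componentwise operations are well defined on $H$. Concretely, for $(x_i)_i,(y_i)_i\in H$ with shadows $a,b\in h$, the computation $\mathcal{D}_{x_i\wedge y_i}=\mathcal{D}_{x_i}\wedge\mathcal{D}_{y_i}=\pi_i(i(a)\wedge i(b))=\pi_i(i(a\wedge b))$ (and its $\vee$-analogue) shows that the $\mathcal{D}$-pattern of $(x_i\wedge y_i)_i$ equals $i(a\wedge b)$, so the result again lies over an element of $h$, hence in $H$. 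Once closure is known, the skew-lattice axioms and strong distributivity come for free: they are equational, hold in $\widehat{P}$ (already noted), pass to the product $\prod_{i\in I}\widehat{P}$, and restrict to the subalgebra $H$. At this stage I would also record the bottom element $0=(0)_i$ and observe that the tuples all of whose coordinates are nonzero form exactly $\prod_{i\in I}P$.

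Next I would identify the commutative shadow. Because Green's relation is defined by the identities $x\wedge y\wedge x=x$ and $y\wedge x\wedge y=y$, the relation $\mathcal{D}$ on $H$ is the restriction of $\mathcal{D}$ on $\prod_{i\in I}\widehat{P}$; using $\widehat{P}/\mathcal{D}\simeq\mathbf{2}$ coordinatewise, two elements of $H$ are $\mathcal{D}$-related precisely when they share the same shadow $a\in h$. Reading off diagram~\eqref{eq:diag-lemma6}, the induced map $H/\mathcal{D}\rTo h$ is then a bijection preserving $\wedge,\vee,0,1$, which gives $H/\mathcal{D}\simeq h$ and confirms that $T(H)=\prod_{i\in I}P$ is the top $\mathcal{D}$-class, corresponding to $1\in h$.

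The delicate part, and the step I expect to be the main obstacle, is completeness. I would compute suprema and infima of a commuting family $\{(x_i^{\alpha})_i\}_\alpha$ in $H$ componentwise. What makes this feasible is that a commuting subset of $\widehat{P}$ is contained in $\{0,p\}$ for a single top $p$, so in each coordinate the supremum (resp.\ infimum) is again a legitimate element of $\widehat{P}$, with support $\bigcup_\alpha\{i:x_i^{\alpha}\neq 0\}$ (resp.\ $\bigcap_\alpha$). What is \emph{not} automatic — and is exactly where completeness of $h$ must be used — is that this support pattern again lies in the image $i(h)$: for finite operations a lattice embedding suffices, but for an arbitrary commuting family one must control the \emph{infinite} join $\bigvee_\alpha a_\alpha$ (resp.\ meet) through $i$. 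The plan is to use the embedding $i$ so that it carries these joins and meets of $h$ to the corresponding unions and intersections of supports; then the componentwise supremum/infimum has shadow $\bigvee_\alpha a_\alpha\in h$, lies in $H$, and is manifestly the least upper bound. With completeness in hand, the two infinite distributive laws are inherited from their componentwise validity in $\widehat{P}$, so $H$ is a noncommutative frame.

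Finally, choosing a distinguished $d\in H$ with $\pi_i(d)\neq 0$ for all $i$ places $d$ in the top $\mathcal{D}$-class $T(H)=\prod_{i\in I}P$. By the equivalence recalled from \cite{KarinNCH} — a noncommutative frame equipped with a distinguished element of its top $\mathcal{D}$-class is the same datum as a complete noncommutative Heyting algebra — this upgrades $H$ to a complete noncommutative Heyting algebra, with commutative shadow $H/\mathcal{D}\simeq h$, as required.
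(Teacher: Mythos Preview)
Your proof follows the paper's line almost exactly: realise $H$ as a sub-skew-lattice of $\prod_{i}\widehat{P}$ and invoke the variety argument for strong distributivity, read off $\mathcal{D}$ and commuting subsets coordinatewise, obtain join completeness from completeness of $h$, and verify the infinite distributive laws componentwise before appealing to \cite{KarinNCH} for the Heyting structure. The step you single out as delicate---that the coordinatewise supremum of a commuting family again has shadow in $i(h)$---is exactly the step the paper compresses into the one line ``$H$ is join complete because $h$ is complete and the diagram~\eqref{eq:diag-lemma6} commutes''; your account is, if anything, more explicit than the paper's at this point.
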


\begin{proof} 
First we observe that $H$ is a strongly distributive skew lattice because it embeds into a power of $\widehat{P}$ and strongly distributive skew lattices form a variety.  Note that elements $x,y\in H$ are $\mathcal D$-equivalent exactly when for all $i\in I$: ($\pi_i(x)=0$ iff $\pi_i(y)=0$). A commuting subset in $H$ is of the form $\{x_j\,|\, j\in J\}$ s.t. $\pi_i(x_j)\neq 0$ together with $\pi_i(x_k)\neq 0$ implies $\pi_i(x_j)=\pi_i(x_k)$, for all $j,k\in J $ and all $i\in I$.  
Skew lattice $H$ is join complete because $h$ is complete and the diagram \eqref{eq:diag-lemma6} commutes. It remains to prove that $H$ satisfies the infinite distributive laws. Given a commuting subset $\{x_j\}\subseteq H$, $y\in H$ and $i\in I$ we need to show that:
\[\pi_i(\bigvee x_j \land y)=\pi_i(\bigvee (x_j\land y)) \text{ and } \pi_i(y\land\bigvee x_j )=\pi_i(\bigvee (y\land x_j)) 
\]
First we observe that $\{x_j\land y\,|\, j\in J\}$ and $\{y\land x_j\,|\,j\in J\}$ are again  commuting subsets. 
Note that if $\pi_i(x_j)\neq 0$ for some $j$ then $\pi_i(\bigvee x_j \land y)=\pi(x_j \land y)=\pi_i(\bigvee (x_j\land y))$. If $\pi_i(x_j)=0$ for all $j$ then $\pi_i(\bigvee x_j \land y)=0=\pi_i(\bigvee (x_j\land y))$.
\end{proof}

\begin{lemma} \label{example} For every contravariant functor
\[
\mathbf{h}~:~\mathbf{C} \rTo \mathbf{cHA} \]
and every presheaf $\mathbf{P} \in \widehat{\mathbf{C}}$ with a global section $d : \mathbf{1} \rTo \mathbf{P}$ there is a contravariant functor
\[
\mathbf{H}~:~\mathbf{C} \rTo \mathbf{ncHA} \qquad C \mapsto \mathbf{H}(C) \]
where $\mathbf{H}(C)$ is the complete noncommutative Heyting algebra constructed in the previous lemma from the complete Heyting algebra $h = \mathbf{h}(C)$ and the set $P= \mathbf{P}(C)$, with presheaf of top elements $\mathbf{T}$. Moreover, $\mathbf{H} /\mathcal{D} \simeq \mathbf{h}$.

In the special case when $\mathbf{h} = \mathbf{\Omega}$ we obtain for every presheaf $\mathbf{P}$ with a global section a noncommutative subobject classifier $\mathbf{H}$ with $\mathbf{H}/\mathcal{D} \simeq \mathbf{\Omega}$.
\end{lemma}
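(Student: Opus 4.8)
The fibrewise object $\mathbf{H}(C)$ is already delivered by the previous lemma, applied to $h = \mathbf{h}(C)$ and $P = \mathbf{P}(C)$; the real content here is to turn $C \mapsto \mathbf{H}(C)$ into a contravariant functor, so the plan is to organise the construction so that every ingredient transforms functorially. The one non-canonical ingredient in the previous lemma is the embedding $i : h \hookrightarrow \prod_{i \in I} \mathbf{2}$. I would make it canonical by taking $I = \mathrm{pt}(h)$, the set of lattice homomorphisms $p : h \rTo \mathbf{2}$ (equivalently, prime filters of $h$), and $i$ the evaluation map $a \mapsto (p(a))_p$, which is injective by the Birkhoff--Stone representation of distributive lattices. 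The point of this choice is that $\mathrm{pt}(-)$ is itself a contravariant functor on $\mathbf{cHA}$: a morphism $\mathbf{h}(f) : \mathbf{h}(C) \rTo \mathbf{h}(D)$ induces $f^\ast : \mathrm{pt}(\mathbf{h}(D)) \rTo \mathrm{pt}(\mathbf{h}(C))$, $\psi \mapsto \psi \circ \mathbf{h}(f)$, and this is exactly the contravariance needed to mesh with the contravariant $\mathbf{P}$.

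Writing $h_C = \mathbf{h}(C)$ and using the pullback description, one has
\[
\mathbf{H}(C) = \{ (a,(s_p)_{p \in \mathrm{pt}(h_C)}) ~|~ a \in h_C,~ s_p \in \widehat{\mathbf{P}(C)},~ s_p = 0 \Leftrightarrow p(a)=0 \}.
\]
For a morphism $f : D \rTo C$ I would then define
\[
\mathbf{H}(f)(a,(s_p)_p) = \lmb \mathbf{h}(f)(a),~\bigl(\widehat{\mathbf{P}(f)}(s_{f^\ast\psi})\bigr)_{\psi \in \mathrm{pt}(h_D)} \rmb,
\]
where $\widehat{\mathbf{P}(f)} : \widehat{\mathbf{P}(C)} \rTo \widehat{\mathbf{P}(D)}$ is the evident skew-lattice map sending $0 \mapsto 0$ and restricting to $\mathbf{P}(f)$ on $\mathbf{P}(C)$. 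First I would check this lands in $\mathbf{H}(D)$: the $\psi$-coordinate vanishes iff $s_{f^\ast\psi}=0$ iff $(f^\ast\psi)(a) = \psi(\mathbf{h}(f)(a)) = 0$, which is precisely the support condition for the first coordinate $\mathbf{h}(f)(a)$. Preservation of $\wedge$, $\vee$ and $0$ is then coordinatewise: on the shadow it is $\mathbf{h}(f)$ preserving the lattice operations, and on the $P$-coordinates it is the skew-lattice map $\widehat{\mathbf{P}(f)}$ doing so. Preservation of the distinguished top element uses that $d$ is a \emph{global} section: the distinguished element of $\mathbf{H}(C)$ is $(1,(d_C)_p)$, and $\mathbf{h}(f)(1)=1$ together with $\mathbf{P}(f)(d_C)=d_D$ sends it to $(1,(d_D)_\psi)$. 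Functoriality, $\mathbf{H}(\mathrm{id})=\mathrm{id}$ and $\mathbf{H}(f\circ g)=\mathbf{H}(g)\circ\mathbf{H}(f)$, then reduces to functoriality of $\mathbf{h}$, of $\mathbf{P}$, and of $\mathrm{pt}(-)$.

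For the claim $\mathbf{H}/\mathcal{D} \simeq \mathbf{h}$, recall from the previous lemma that $(a,(s_p)_p)\,\mathcal D\,(a',(s'_p)_p)$ iff the two families have the same support, i.e.\ iff $a = a'$ (the embedding $i$ being injective). Hence the projection $(a,(s_p)_p)\mapsto a$ induces a bijection $\mathbf{H}(C)/\mathcal D \rTo^{\simeq} h_C$, which is an isomorphism of complete Heyting algebras by the previous lemma; naturality is immediate since $\mathbf{H}(f)$ acts on first coordinates precisely by $\mathbf{h}(f)$. This yields the required natural isomorphism of functors $\mathbf{C} \rTo \mathbf{cHA}$, and taking $\mathbf{h}=\mathbf{\Omega}$ gives $\mathbf{H}/\mathcal D \simeq \mathbf{\Omega}$, which by definition exhibits $\mathbf{H}$ as a noncommutative subobject classifier.

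The step I expect to be the main obstacle is verifying that $\mathbf{H}(f)$ is a morphism in $\mathbf{ncHA}$ in the strict sense, i.e.\ that it also preserves $\rightarrow$. The implication in $\mathbf{H}(C)$ is governed by the Heyting implication of the shadow $h_C$ on first coordinates, the $P$-coordinates outside the support of the second argument being filled with the distinguished element $d_C$ (cf.\ the formula $(U,s)\rightarrow(V,t)=(U\rightarrow V,\,t\cup g|_{(U\rightarrow V)-V})$ of the introduction). Thus $\mathbf{H}(f)$ preserves $\rightarrow$ exactly when $\mathbf{h}(f)(a\rightarrow a') = \mathbf{h}(f)(a)\rightarrow\mathbf{h}(f)(a')$, that is, when $\mathbf{h}(f)$ is a morphism of Heyting algebras. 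This is automatic in the case $\mathbf{h}=\mathbf{\Omega}$ of interest, where the restriction maps $\mathbf{\Omega}(f)=f^\ast$ preserve Heyting implication; for a general $\mathbf{h}$ one reads the statement with the understanding that the $\mathbf{h}(f)$ preserve implication.
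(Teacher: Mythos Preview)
Your argument is correct and considerably more thorough than the paper's: the authors' entire proof is the single line ``Follows immediately from the previous lemma.'' You have supplied precisely what they leave implicit, namely a canonical choice of the index set $I$ (via $\mathrm{pt}(h)$) so that the construction becomes functorial, an explicit description of the transition maps $\mathbf{H}(f)$, and the verifications that these land in $\mathbf{H}(D)$, preserve the operations, and are functorial. Your identification $\mathbf{H}/\mathcal{D}\simeq\mathbf{h}$ via the first coordinate is exactly the previous lemma's content made natural.

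Your closing caveat is worth highlighting: the paper's definition of $\mathbf{ncHA}$ requires morphisms to preserve $\rightarrow$, whereas its definition of $\mathbf{cHA}$ only asks for lattice maps preserving $0$ and $1$. So, as you observe, $\mathbf{H}(f)$ preserving $\rightarrow$ genuinely hinges on $\mathbf{h}(f)$ preserving Heyting implication, a point the paper does not discuss. For the application $\mathbf{h}=\mathbf{\Omega}$ this holds, so the special case stands; for general $\mathbf{h}$ your reading (that one must assume the $\mathbf{h}(f)$ preserve implication) is the honest one.
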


\begin{proof} Follows immediately from  the previous lemma.
\end{proof}

Let us work out an explicit example. Let $\mathbf{C}$ be the category having two objects $V$ and $E$ and two non-identity morphisms $s,t : V \rTo E$, then it is easy to see that the presheaf topos
\[
\widehat{\mathbf{C}} \simeq \mathbf{diGraph} \]
is the category of directed graphs. A presheaf $\mathbf{P} : \mathbf{C} \rTo \mathbf{Sets}$ determines a set of vertices $\mathbf{P}(V)$ and edges $\mathbf{P}(E)$ and the two maps $\mathbf{P}(s),\mathbf{P}(t) : \mathbf{P}(E) \rTo \mathbf{P}(V)$ assign to an edge its starting resp. terminating vertex. The subobject classifier $\mathbf{\Omega}$ is given by
\[
\begin{cases}
\mathbf{\Omega}(E) = \{ 1 = \{ id_E,s,t \}, U=\{ s,t \}, S = \{ s \}, T=\{ t \}, 0 = \emptyset \} \\
\mathbf{\Omega}(V) = \{ 1 = \{ id_V \}, 0 = \emptyset \}
\end{cases}
\]
and corresponds to the directed graph
\[
\xymatrix{\vtx{1} \ar@(l,u)^{1} \ar@(l,d)_{U} \ar@/^2ex/[rr]^S & & \vtx{0} \ar@(ur,dr)^{0} \ar@/^2ex/[ll]^{T}} \]
with the terminal subobject $\mathbf{1}$ corresponding to the subgraph on the loop $1$. The Heyting algebras have poset structure

\vskip 3mm

\[
\mathbf{\Omega}(E) = \quad \xymatrix{& 1 \ar@{-}[d] & \\ & U \ar@{-}[ld] \ar@{-}[rd] & \\ S \ar@{-}[rd] & & T \ar@{-}[ld] \\ & 0 &} \qquad  \qquad  \mathbf{\Omega}(V)= \quad \xymatrix{1 \ar@{-}[d] \\ 0} \]
It is easy to verify that there are exactly $4$ Lawvere-Tierney topologies on $\widehat{\mathbf{C}}$ with corresponding Grothendieck topologies on $\mathbf{C}$ and corresponding sheafifications:
\begin{enumerate}
\item{$J_1(V)=\{ 1 \}$ and $J_1(E)= \{ 1 \}$, the chaotic topology.  All presheaves are $J_1$-sheaves and the sheafification functor is the identity.}
\item{$J_2(V)=\{ 1 \}$ and $J_2(E)=\{ 1,U \}$. The sheaf condition for $\mathbf{P}$ asserts that for all $v,w \in \mathbf{P}(V)$ there is a unique edge $e$ with $s(e)=v$ and $t(e)=w$. That is, sheaves are the complete directed graphs, and the sheafification of a directed graph is the complete directed graph on the vertices.}
\item{$J_3(V)=\{ 1,0 \}$ and $J_3(E)=\{ 1 \}$. The only non-maximal covering sieve on $V$ is the empty sieve. A presheaf $\mathbf{P}$ is a $J_3$-sheaf if and only if $\mathbf{P}(V)$ is a singleton. The sheafification sends the vertices of a directed graph all to the same vertex and each edge to a different loop. }
\item{$J_4(V)=\{ 1,0 \}$ and $J_4(E)=\{ 1,U,S,T,0 \}$, the discrete topology. Here the only sheaf is the terminal object (a one loop graph) and sheafification is the unique map to the terminal object.}
\end{enumerate}

\vskip 3mm

Consider the presheaf $\mathbf{P} = \xymatrix{\vtx{x} \ar@(ul,dl)_{a} \ar@(ur,dr)^{b}}$, then the noncommutative subobject classifier $\mathbf{H}$ corresponding to $\mathbf{\Omega}$ and $\mathbf{P}$ as constructed in lemma~\ref{example} can be slightly simplified such that $\mathbf{H}(E)$ has only $4$ top elements, rather than the $8$ given by the construction. The corresponding directed graph is

\vskip 2mm

\[
\xymatrix{\vtx{x}  \ar@2@(ur,ul)_{1_{aa},1_{ab}} \ar@2@(ul,l)_{1_{ba},1_{bb}} \ar@2@(l,dl)_{U_{aa},U_{ab}} \ar@2@(dl,dr)_{U_{ba},U_{bb}} \ar@/^2ex/[rrr]^{S_a} \ar@/^6ex/[rrr]^{S_b} & & & \vtx{0} \ar@/^2ex/[lll]^{T_a} \ar@/^6ex/[lll]^{T_b} \ar@(ur,dr)^{0}} \]

\vskip 3mm

\noindent
with the subobject $\mathbf{T} \rTo \mathbf{H}$ corresponding to the subgraph on the $4$ loops $1_{aa},1_{ab},1_{ba}$ and $1_{bb}$. The poset structure on the noncommutative Heyting algebras is $\mathbf{H}(V) \simeq \mathbf{\Omega}(V) \simeq \mathbf{2}$ and

\vskip 3mm
\[
\mathbf{H}(E) = \xymatrix{ & 1_{aa} \ar@{-}[d] \ar@{.}[r] & 1_{ab} \ar@{-}[d] \ar@{.}[rr] & & 1_{ba} \ar@{-}[d] \ar@{.}[r] &1_{bb} \ar@{-}[d] & \\
& U_{aa} \ar@{.}[r] \ar@{-}[d] \ar@{-}[rrrd] & U_{ab} \ar@{.}[rr] \ar@{-}[ld] \ar@{-}[rrrd] & & U_{ba} \ar@{.}[r] \ar@{-}[d] \ar@{-}[lld] & U_{bb} \ar@{-}[llld] \ar@{-}[d] & \\
& S_a \ar@{.}[r] \ar@{-}[rrd] & S_b \ar@{-}[rd]  &  &  T_a \ar@{-}[ld] \ar@{.}[r]& T_b \ar@{-}[lld] & \\
& & & 0 & & & }   \]
We will next determine the noncommutative toposes determined by noncommutative Grothendieck topologies associated to $\mathbf{H}$. The category of presheaves is the slice category $\widehat{\mathbf{C}}/\mathbf{T}$. A directed graph with a morphism $\pi_{\mathbf{F}} : \mathbf{F} \rTo \mathbf{T}$ is a directed graph with a $4$-coloring of its edges. Morphisms in $\widehat{\mathbf{C}}/\mathbf{T}$ are directed graph morphisms preserving the coloring of edges.

\begin{lemma} There are exactly $16$ noncommutative Grothendieck topologies associated to $\mathbf{H}$:
\[
j_{\mathbf{H}}(E) = \{ 1_{aa},1_{ab},1_{ba},1_{bb} \} \cup S  \quad \text{with} \quad S \subseteq \{ U_{aa},U_{ab},U_{ba},U_{bb} \} \]
Any $4$-colored digraph satisfies the sheaf condition if $S = \emptyset$. For the noncommutative Grothendieck topologies with $S \not= \emptyset$ the sheaves are exactly the complete digraphs with a $4$-coloring. 
\end{lemma}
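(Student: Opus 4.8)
The plan is to compute the noncommutative Lawvere topology $j_{\mathbf{H}}$ component by component on the two objects, reading the source/target maps $s^{*},t^{*}\colon\mathbf{H}(E)\to\mathbf{H}(V)$ off the commutative shadow: the loops $1_{xy},U_{xy}$ have $s^{*}=t^{*}=\top_V$, the arrows $S_x$ have $s^{*}=\top_V,\ t^{*}=0_V$, the arrows $T_y$ have $s^{*}=0_V,\ t^{*}=\top_V$, and $0$ has $s^{*}=t^{*}=0_V$ (here $\top_V$ is the unique top of $\mathbf{H}(V)\simeq\mathbf{2}$ and $\mathbf{T}(V)=\{\top_V\}$ is a single point). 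The four top elements generate the principal downsets $(1_{xy})_{\downarrow}=\{0,S_x,T_y,U_{xy},1_{xy}\}$, each isomorphic to $\mathbf{\Omega}(E)$ by the structure theorem and meeting one another only in the shared lower elements; crucially each $U_{xy}$ lies in exactly one of them. By NLT1 every $1_{xy}$ is fixed, and NLT2 together with NLT3 says that the restriction of $j_{\mathbf{H}}(E)$ to each slice $(1_{xy})_{\downarrow}$ is an idempotent, meet-preserving, top-fixing endomap. The whole argument is then a finite bookkeeping against these slices and the naturality squares.

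First I would pin down $j_{\mathbf{H}}(V)$. NLT1 gives $j_V(\top_V)=\top_V$, so the only issue is whether $j_V(0_V)=0_V$ or $\top_V$. The key point — and what I expect to be the main obstacle — is ruling out $j_V(0_V)=\top_V$, because this value is precisely the ``shadow $J_3/J_4$'' behaviour that does occur as a genuine Lawvere–Tierney topology on $\widehat{\mathbf{C}}$. Here it is obstructed by the interaction of naturality with NLT3: if $j_V(0_V)=\top_V$ then naturality forces $j_{\mathbf{H}}(E)(T_a)$ to be a loop at $x$ (an element with $s^{*}=t^{*}=\top_V$), while NLT3 applied inside the two slices $(1_{aa})_{\downarrow}$ and $(1_{ba})_{\downarrow}$ that both contain $T_a$ (using $T_a\wedge U_{aa}=T_a$ and $T_a\wedge U_{ba}=T_a$, and $j(U_{aa})\le 1_{aa}$, $j(U_{ba})\le 1_{ba}$) forces this loop to lie in $\{1_{aa},U_{aa}\}$ and in $\{1_{ba},U_{ba}\}$ at once — an empty intersection. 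Hence $j_V=\mathrm{id}$.

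With $j_V=\mathrm{id}$, naturality (using both $s^{*}$ and $t^{*}$) confines $j_{\mathbf{H}}(E)(S_x)$ to $\{S_a,S_b\}$, $j_{\mathbf{H}}(E)(T_y)$ to $\{T_a,T_b\}$, $j(0)$ to the elements below, and each $j(U_{xy})$ to a loop at $x$; NLT3 in the relevant slices then sharpens these to $j(S_x)=S_x$, $j(T_y)=T_y$, $j(0)=0$ and $j(U_{xy})\in\{U_{xy},1_{xy}\}$, the four choices being independent precisely because each $U_{xy}$ sits in a single slice. This gives a bijection between noncommutative Lawvere topologies and subsets $S\subseteq\{U_{aa},U_{ab},U_{ba},U_{bb}\}$, hence exactly $16$; conversely I would check that each such $S$ defines a map satisfying naturality and NLT1--NLT3, so all $16$ are realized. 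Passing to covering families via $S=\{f:\mathbf{H}(f)(x)\in\mathbf{T}\}$, a top element is covering with maximal sieve, while $U_{xy}$ is covering iff $U_{xy}\in S$ and then its sieve is $\{s,t\}$; this is exactly the stated description $j_{\mathbf{H}}(E)=\{1_{aa},1_{ab},1_{ba},1_{bb}\}\cup S$.

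Finally I would unwind the sheaf condition for $(\mathbf{F},\pi_{\mathbf{F}})\in\widehat{\mathbf{C}}/\mathbf{T}$. The maximal-sieve coverings impose nothing, so only the coverings $(\{s,t\},U_{xy})$ with $U_{xy}\in S$ are effective. The sieve $\{s,t\}\hookrightarrow\mathbf{y}E$ is the two-vertex, edge-free subpresheaf, and since $\mathbf{T}(V)$ is a point its decoration $x\colon\{s,t\}\to\mathbf{T}$ is trivial, so a morphism $g$ in the slice is nothing but an arbitrary ordered pair of vertices of $\mathbf{F}$; an extension $\mathbf{y}E\to\mathbf{F}$ in $\widehat{\mathbf{C}}$ is exactly an edge from $g(s)$ to $g(t)$, and because the extension is required only in $\widehat{\mathbf{C}}$ and not in the slice, its colour is unconstrained. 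Thus the existence-and-uniqueness condition for all $g$ is precisely that $\mathbf{F}$ carry a unique edge between every ordered pair of vertices, i.e.\ that $\mathbf{F}$ be a complete digraph, with the $4$-colouring free. When $S=\emptyset$ there are no non-maximal coverings and every $4$-coloured digraph is a sheaf; when $S\neq\emptyset$ at least one effective covering is present and the sheaves are exactly the $4$-coloured complete digraphs.
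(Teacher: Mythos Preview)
Your proof is correct and follows essentially the same approach as the paper's. The only notable difference is in the argument ruling out $j_{\mathbf{H}}(V)(0)=\top_V$: the paper derives the contradiction from $j_{\mathbf{H}}(E)(0)$ (naturality forces it to be a loop, while order-preservation from NLT3 forces it into $\bigcap_{x,y}(1_{xy})_{\downarrow}=\{0\}$), whereas you run the same slice-intersection argument with $T_a$ instead of $0$. Your version is more explicit about why each of the $16$ candidates actually satisfies the axioms and why the four choices for $j(U_{xy})$ are independent, points the paper leaves implicit; the sheaf analysis is identical to the paper's.
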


\begin{proof} Assume that a noncommutative Lawvere topology $j_{\mathbf{H}} : \mathbf{H} \rTo \mathbf{H}$ is such that $j_{\mathbf{H}}(V)(0) = 1$, then $j_{\mathbf{H}}(E)(0) \in \{ 1_{aa},1_{ab},1_{ba},1_{bb},U_{aa},U_{ab},U_{ba},U_{bb} \}$ which is impossible because $j_{\mathbf{H}}(E)$ must be order preserving. Therefore $j_{\mathbf{H}}(V) = id_{\mathbf{H}(V)}$. As a consequence the Grothendieck topologies on $\mathbf{C}$ corresponding to the global sections $1_{aa},1_{ab},1_{ba},1_{bb}$ can only be either $J_1$ or $J_2$, giving the 16 cases. If $S = \emptyset$ we have no conditions to satisfy for $\mathbf{F} \rTo \mathbf{T}$.

If, however $S \not= \emptyset$, each occurrence of $U_{aa},U_{ab},U_{ba}$ or $U_{bb}$ gives rise to a condition

\vskip 2mm

\[
\begin{diagram}
& & \boxed{\xymatrix{\vtx{} \ar[r] & \vtx{}}} &  & \\
& \ruTo & & \rdDotsto^{\exists !} & \\
\boxed{\xymatrix{\vtx{} & \vtx{}}} &  & \rTo & & \mathbf{F} \\
& \rdTo & & \ldTo & \\
& & \xymatrix{\vtx{} \ar@2@(ul,dl)_{1_{aa},1_{ab}} \ar@2@(ur,dr)^{1_{ba},1_{bb}}} & &
\end{diagram}
\]

\vskip 2mm

\noindent
which means that for every pair of vertices $v,w \in \mathbf{F}(V)$ there must be a unique edge $\xymatrix{\vtx{v} \ar[r] & \vtx{w}}$. Note that the color of this unique edge is not imposed by $U_{aa},U_{ab},U_{ba}$ or $U_{bb}$. Therefore, $\mathbf{F}$ is a sheaf for the noncommutative Grothendieck topology if and only if $\mathbf{F}$ is a complete digraph with a certain $4$-coloring of the edges determined by the map $\mathbf{F} \rTo \mathbf{T}$.
\end{proof}

It does follow that for any noncommutative Grothendieck topology $J_{\mathbf{H}}$ with $S \not= \emptyset$  the noncommutative topos $\mathbf{Sh}(\mathbf{C},J_{\mathbf{H}})$ is {\em not} a Grothendieck topos, nor even an elementary topos, as it fails to have a terminal object (the four loop graph with one loop of each color is {\em not} a sheaf).

\end{document}